\DeclareMathOperator{\Pic}{Pic}
\DeclareMathOperator{\sing}{sing}
\DeclareMathOperator{\Supp}{Supp}
\DeclareMathOperator{\codim}{codim}
\DeclareMathOperator{\RatCurves}{RatCurves}
\DeclareMathOperator{\Chow}{Chow}
\DeclareMathOperator{\Locus}{Locus}
\DeclareMathOperator{\h}{h}
\DeclareMathOperator{\an}{an}
\DeclareMathOperator{\im}{im}
\title{Del Pezzo foliations with log canonical singularities}
\author{Jo\~ao Paulo \textsc{FIGUEREDO}}
\address{\noindent IMPA, Estrada Dona Castorina 110, Rio de
  Janeiro, 22460-320, Brazil} 
\email{joaoplf@impa.br}
\begin{document}

\newtheorem{thm}{Theorem}[section]
\newtheorem{lem}[thm]{Lemma}
\newtheorem{prop}[thm]{Proposition}
\newtheorem{cor}[thm]{Corollary}
\theoremstyle{definition}
\newtheorem{defi}[thm]{Definition}
\theoremstyle{remark}
\newtheorem{rmk}[thm]{Remark}

\begin{abstract}
  In this paper, we classify del Pezzo foliations of rank at least 3 on projective manifolds and with log canonical singularities in the sense of McQuillan.
  
  \medskip
  
  \noindent {\bf Keywords}: Fano foliations; Del Pezzo foliations; Algebraically integrable foliations; Foliations on Fano manifolds.
  
  \medskip
  
  \noindent {\bf Classification codes}: 14M22, 37F75.
\end{abstract}

\maketitle

\section{Introduction}

A foliation $\mathcal{F}$ on a normal complex variety $X$ is defined by a subsheaf $T_{\mathcal{F}} \subset T_X$ which is saturated in $T_X$ and closed under the Lie bracket. The anti-canonical class $-K_{\mathcal{F}}$ of $\mathcal{F}$ is the divisor class associated to the determinant $\det(T_{\mathcal{F}})$, and thus generalizes the anti-canonical class $-K_X$ of $X$. By analogy with the classical case, one tries to infer geometric information about $\mathcal{F}$ via numerical properties of $K_{\mathcal{F}}$. 

Conjecturally, the Minimal Model Program (MMP) says that each complex manifold is birational to a complex variety $X$ with mild singularities, such that either $K_X$ is nef (numerically effective), or $X$ is a fibration with Fano fibers $F$ (i.e. $-K_F$ is ample). In particular, the study of Fano varieties is important in the context of classification of algebraic varieties.

Therefore, if one expects a similar classification in the theory of foliations, then one is led to the concept of Fano foliations. They are the foliations $\mathcal{F}$ such that $-K_{\mathcal{F}}$ is ample. Fano foliations were introduced and studied by Araujo and Druel in \cite{fanofoliations} and \cite{fanofoliations2}. We remark that, although we do not have, at the moment, a conjecture similar to the classical MMP to foliations in all dimensions, there is a foliated MMP for dimension at most $3$ (see \cite{Spicer2017} and \cite{Cascini2018}). In this foliated MMP, one must restrict the singularities of the foliations involved to the class of F-dlt singularities, which is analogous to the dlt singularities in the case of varieties. The class of log canonical singularities in the sense of McQuillan is the largest class of singularities for which one can still make sense of the steps of the MMP (see definition \ref{sing}).

 The index $\iota_X$ of a Fano manifold is the largest integer dividing $-K_X$ in $\Pic(X)$. By a theorem of Kobayashi-Ochiai, we always have $\iota_X \leq \dim(X)+1$, and moreover equality holds if, and only if, $X$ is a projective space. The index $\iota_{\mathcal{F}}$ of a Fano foliation $\mathcal{F}$ is defined in an analogous way, and it turns out that there is a similar result for Fano foliations:

\begin{thm}[{{\cite[Theorem 1.1]{Araujo2008}}}]
	Let $\mathcal{F} \subsetneq T_X$ be a Fano foliation of rank $r$ on a complex projective manifold $X$. Then $\iota_{\mathcal{F}} \leq r$, and equality holds only if $X \cong \mathbb{P}^n$.\label{index}
\end{thm}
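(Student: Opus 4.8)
The plan is to restrict $T_{\mathcal{F}}$ to a minimal rational curve of $X$, read off an upper bound for $\iota_{\mathcal{F}}$ from the splitting type, and then recognize $\mathbb{P}^n$ in the boundary case via the variety of minimal rational tangents together with the theory of algebraically integrable foliations.

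First, since $-K_{\mathcal{F}}$ is ample, $T_{\mathcal{F}}$ has positive degree on a general complete intersection curve $C$ of $X$; hence $\Omega^1_X$ is not nef along $C$, and Miyaoka's criterion shows that $X$ is uniruled. Now fix a minimal covering family of rational curves and let $f\colon\mathbb{P}^1\to C\subset X$ be a general member. Then $C$ is disjoint from $\sing\mathcal{F}$ (which has codimension $\ge 2$), so $f^{*}T_{\mathcal{F}}$ is a rank-$r$ subbundle of $f^{*}T_X\cong\mathcal{O}(2)\oplus\mathcal{O}(1)^{\oplus p}\oplus\mathcal{O}^{\oplus(n-1-p)}$ with $p=-K_X\cdot C-2\ge 0$, and $f^{*}N_{\mathcal{F}}$ is a locally free, globally generated quotient. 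A rank-$r$ subbundle of that sheaf has degree at most $2+\min(p,r-1)\le r+1$, so $-K_{\mathcal{F}}\cdot C=\deg f^{*}T_{\mathcal{F}}\le r+1$. Writing $-K_{\mathcal{F}}=\iota_{\mathcal{F}}L$ in $\Pic(X)$, where $L$ is ample because $\iota_{\mathcal{F}}L$ is, we obtain $\iota_{\mathcal{F}}\le\iota_{\mathcal{F}}(L\cdot C)=-K_{\mathcal{F}}\cdot C\le r+1$.

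Thus the inequality $\iota_{\mathcal{F}}\le r$ can fail only if $\iota_{\mathcal{F}}=r+1$, and in that case $L\cdot C=1$ and $-K_{\mathcal{F}}\cdot C=r+1$, which forces $f^{*}T_{\mathcal{F}}\cong\mathcal{O}(2)\oplus\mathcal{O}(1)^{\oplus(r-1)}$: this sheaf is ample, and since it contains $T_{\mathbb{P}^1}$ the curve $C$ is tangent to $\mathcal{F}$. As this holds for the general member of the family through a general point $x$, the variety of minimal rational tangents $\mathcal{C}_x$ is contained in $\mathbb{P}(T_{\mathcal{F},x})$, so $\dim\mathcal{C}_x\le r-1$; but $\dim\mathcal{C}_x=-K_X\cdot C-2\ge -K_{\mathcal{F}}\cdot C-2=r-1$, whence $\mathcal{C}_x=\mathbb{P}(T_{\mathcal{F},x})$. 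The Bogomolov--McQuillan theorem (in the form of Kebekus--Sol\'a Conde--Toma) then shows $\mathcal{F}$ is algebraically integrable with rationally connected leaves; the variety of minimal rational tangents of a general leaf fills its projectivized tangent space, so by the characterization of projective space (Cho--Miyaoka--Shepherd-Barron), applied on the normalization, the general leaf is a projective space of dimension $r$ on which $-K_{\mathcal{F}}$ restricts to the anticanonical class. Because $r<n$ (as $\mathcal{F}\subsetneq T_X$), the leaves form a family of such projective spaces over a positive-dimensional base and $\mathcal{F}$ is, birationally, the relative tangent sheaf of a nontrivial projective bundle, whose relative anticanonical class is never ample --- contradicting that $-K_{\mathcal{F}}$ is ample. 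Hence $\iota_{\mathcal{F}}\le r$. For the equality case $\iota_{\mathcal{F}}=r$ one argues along the same lines, using algebraic integrability and comparing $-K_{\mathcal{F}}$ with the canonical class of a resolution of a general leaf; the conclusion is $X\cong\mathbb{P}^n$.

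The uniruledness of $X$ and the bound $\iota_{\mathcal{F}}\le r+1$ are soft; the substance lies in the boundary analysis. I expect the main obstacle to be the comparison on a general leaf, which need not be smooth or normal: after passing to a resolution $\widetilde{F}$, one must control the difference between $-K_{\mathcal{F}}$ restricted to the leaf and $-K_{\widetilde{F}}$, a difference governed by the different of $\mathcal{F}$ along $\sing\mathcal{F}$ --- and this is precisely where the hypothesis of log canonical singularities in the sense of McQuillan is used. Establishing algebraic integrability in the relevant range of indices, and combining it with characterizations of projective space (Cho--Miyaoka--Shepherd-Barron, Kobayashi--Ochiai), are the other ingredients that carry the boundary case.
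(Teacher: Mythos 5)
This statement is quoted in the paper from \cite[Theorem 1.1]{Araujo2008}; the paper gives no proof, so your attempt has to stand on its own against the argument of Araujo--Druel--Kov\'acs, which proceeds by a different route (it treats a nonzero section of $\bigwedge^r T_X\otimes L^{-r}$, i.e.\ a Pfaff field with no integrability assumed, and the key technical input is a lifting result for Pfaff fields to normalizations of leaf closures). Your ``soft'' part is correct: uniruledness via Miyaoka, the splitting of $f^*T_X$ on a general minimal free curve, the degree bound $-K_{\mathcal F}\cdot C\le r+1$, and hence $\iota_{\mathcal F}\le r+1$.

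There are, however, two genuine gaps. First, the case $\iota_{\mathcal F}=r$ --- which is the actual content of the theorem (``equality holds only if $X\cong\mathbb P^n$'') --- is dismissed with ``one argues along the same lines,'' but the same lines break down there: when $-K_{\mathcal F}\cdot C=r$ the splitting type of $f^*T_{\mathcal F}$ may be $\mathcal O(2)\oplus\mathcal O(1)^{\oplus(r-2)}\oplus\mathcal O$, which is not ample, so Bogomolov--McQuillan/Kebekus--Sol\'a Conde--Toma gives no algebraic integrability and your whole mechanism is unavailable; already for $r=1$ the equality case is Wahl's theorem, which your argument neither reproves nor reduces to. Second, the contradiction closing the case $\iota_{\mathcal F}=r+1$ is not valid as written: $K_{\mathcal F}$ is not the relative canonical class of an honest fibration on $X$, and on a birational model where the leaves do form a fibration, $K_{\mathcal F}$ differs from $K_{X'/Y'}$ by a correction divisor (the $\Delta$ of the log leaf), while ampleness neither descends nor ascends under the birational modification --- this is exactly the delicate comparison you flag in your last paragraph, and it cannot be skipped; moreover the assertion that the relative anticanonical class of a fibration over a positive-dimensional base is never ample is true but itself needs a citation or proof. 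Finally, note that the theorem carries no hypothesis on the singularities of $\mathcal F$, so the log canonical condition you propose to invoke at the end is not available here and is not what powers the boundary case in the reference.
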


The study of foliations on $\mathbb{P}^n$ is classical. For example, a foliation $\mathcal{F}$ of codimension $1$ on $\mathbb{P}^n$ is given by a global homogeneous 1-form $\omega$ on $\mathbb{P}^n$ such that $\codim(\sing(\omega)) \geq 2$, $\omega \wedge d\omega =0$, and $\iota_R \omega = 0$, where $R = \sum_i x_i \frac{\partial}{\partial x_i}$ is the radial vector field in $\mathbb{C}^{n+1}$. Suppose $\omega$ has degree $\nu$. Then $K_{\mathcal{F}} = \mathcal{O}_{\mathbb{P}^n}(\nu-1-n)$. We see that if $\nu < n + 1$, then $\mathcal{F}$ is Fano and the index of $\mathcal{F}$ is $\iota_{\mathcal{F}} = n+1-\nu$. The degree of a foliation in $\mathbb{P}^n$ can be defined for arbitrary rank, and we still have an explicit formula for its index. In the case of maximal index (which corresponds, in codimension 1, to forms of degree 2) and arbitrary rank $r$, the foliation is given by a linear projection $\mathbb{P}^n \dashrightarrow \mathbb{P}^{n-r}$ (see \cite[Chapitre 3]{DC}).

The next case to consider is when $\iota_{\mathcal{F}} = r-1$. These are called del Pezzo foliations. Their study was initiated by Araujo and Druel also in papers \cite{fanofoliations} and \cite{fanofoliations2}. One important property of del Pezzo foliations they show is that, in most cases, they are algebraically integrable:

\begin{thm}[{{\cite[Theorem 1.1]{fanofoliations}}}]
	Let $\mathcal{F}$ be a del Pezzo foliation on a complex projective manifold $X \not\cong \mathbb{P}^n$. Then $\mathcal{F}$ is algebraically integrable, and its general leaves are rationally connected. \label{algebraicdelpezzo}
\end{thm}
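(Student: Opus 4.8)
The plan is to produce a large supply of rational curves tangent to $\mathcal{F}$, run the Bogomolov--McQuillan algebraicity machine on them, and then use the del Pezzo hypothesis together with the index bound of Theorem~\ref{index} to see that the resulting algebraic leaves are exactly the leaves of $\mathcal{F}$. Throughout, write $-K_{\mathcal{F}}\sim (r-1)H$ with $H$ ample and $r=\operatorname{rank}\mathcal{F}\geq 2$.

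First, $X$ is uniruled: were it not, Miyaoka's generic semipositivity theorem would make $\Omega^1_X|_C$ nef on a general complete intersection curve $C$ of sufficiently ample divisors, so every saturated subsheaf of $T_X|_C$ --- in particular $T_{\mathcal{F}}|_C$ --- would have non-positive degree, contradicting $-K_{\mathcal{F}}\cdot C>0$. The ampleness of $-K_{\mathcal{F}}$ moreover forces the existence of rational curves \emph{tangent} to $\mathcal{F}$ and covering $X$: one analyses a general member $\ell$ of a minimal covering family of rational curves of $X$, decomposing $T_X|_\ell$ in the usual way and comparing with the rank-$r$ subbundle $T_{\mathcal{F}}|_\ell$ of degree $(r-1)(H\cdot\ell)$, and runs a foliated bend-and-break argument; this produces a covering family $\mathcal{C}$ of rational curves tangent to $\mathcal{F}$, which we fix of minimal $(-K_{\mathcal{F}})$-degree. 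For a general member of $\mathcal{C}$ with normalization $f\colon\mathbb{P}^1\to X$, minimality yields a splitting $f^{*}T_{\mathcal{F}}\cong\mathcal{O}(2)\oplus\mathcal{O}(1)^{\oplus a}\oplus\mathcal{O}^{\oplus(r-1-a)}$ with $2+a=-K_{\mathcal{F}}\cdot\ell=(r-1)(H\cdot\ell)$; since there are only $r$ summands, $-K_{\mathcal{F}}\cdot\ell\leq r+1$, which combined with $(r-1)\mid(-K_{\mathcal{F}}\cdot\ell)$ forces $H\cdot\ell=1$ and $-K_{\mathcal{F}}\cdot\ell=r-1$ as soon as $r\geq 4$, the cases $r=2,3$ being dealt with by hand.

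Now the equivalence relation generated by chains of $\mathcal{C}$-curves is encoded by an almost holomorphic map $\pi\colon X\dashrightarrow T$ with rationally chain connected --- hence rationally connected, its general fibre being smooth --- general fibre $F$ (Campana; Koll\'ar--Miyaoka--Mori). As $\mathcal{C}$ dominates $X$ and every $\mathcal{C}$-curve is tangent to $\mathcal{F}$, the fibre $F$ is tangent to $\mathcal{F}$, so the foliation $\mathcal{G}$ whose leaves are the fibres of $\pi$ is a saturated, algebraically integrable subfoliation of $\mathcal{F}$ with rationally connected general leaf $F$. The whole theorem thus reduces to the equality $\operatorname{rank}\mathcal{G}=\operatorname{rank}\mathcal{F}$, i.e.\ $\dim F=r$: granting it, $\mathcal{G}=\mathcal{F}$, so $\mathcal{F}$ is algebraically integrable and the closure of its general leaf is the rationally connected fibre $F$.

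The hard part --- where the hypothesis $X\not\cong\mathbb{P}^n$ is genuinely used --- is precisely this equality. Assuming $\dim F=s<r$, the foliation $\mathcal{F}$ descends to a foliation of rank $r-s\geq 1$ on a smooth model of $T$, and one must track how the positivity $-K_{\mathcal{F}}\sim(r-1)H$ propagates through the relative canonical class of $\pi$ and its ramification divisor; the upshot should be that either this descended foliation, or an auxiliary Fano foliation built from the $\mathcal{F}$-directions transverse to $\mathcal{G}$ along a general leaf of $\mathcal{G}$, has index equal to its rank, whence $X\cong\mathbb{P}^n$ by Theorem~\ref{index} --- a contradiction. I expect the principal technical obstacle to be exactly this positivity bookkeeping: the closure $\overline{F}$ is typically singular along the locus where $\mathcal{F}$ fails to be transverse to $\pi$ (or where $\mathcal{C}$-curves degenerate), so the comparison of $-K_{\mathcal{F}}$, $-K_{\mathcal{G}}$ and the descended anticanonical class cannot be carried out on a smooth model and must absorb discrepancy and ramification contributions, which is where the sharp value $-K_{\mathcal{F}}\cdot\ell=r-1$ from the previous step and a careful application of Theorem~\ref{index} have to be leveraged.
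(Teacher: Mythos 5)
This statement is imported from Araujo--Druel (\cite[Theorem 1.1]{fanofoliations}); the present paper does not reprove it, but Lemma~\ref{decomposition} records the engine of their proof: the splitting type of $g^{*}T_{\mathcal{F}}$ on a general member of a minimal dominating family, constrained by the fact that $X\not\cong\mathbb{P}^n$ forces $g^{*}T_{\mathcal{F}}$ to be non-ample. Your proposal has the right silhouette (positivity of $T_{\mathcal{F}}$ along rational curves $\Rightarrow$ an algebraically integrable subfoliation $\mathcal{G}\subseteq\mathcal{F}$ with rationally connected leaves $\Rightarrow$ $\mathcal{G}=\mathcal{F}$), but its first step is backwards. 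You cannot begin by producing a \emph{covering family of rational curves tangent to} $\mathcal{F}$ via ``foliated bend-and-break'' on a minimal covering family of $X$: in case (1) of Lemma~\ref{decomposition} the general minimal rational curve has $g^{*}T_{\mathcal{F}}\cong\mathcal{O}_{\mathbb{P}^1}(1)^{\oplus(r-1)}\oplus\mathcal{O}_{\mathbb{P}^1}$ and is \emph{transverse} to $\mathcal{F}$ (the $\mathcal{O}(2)$ summand of $g^{*}T_X$ lies in the normal direction); this is precisely the $\mathbb{P}^m$-bundle case of Theorem~\ref{thm1.0}, so it genuinely occurs. Bend-and-break needs a tangent curve to deform inside a leaf, which is what you are trying to construct. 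The correct tool is the Bogomolov--McQuillan/Kebekus--Sol\'a Conde--Toma algebraicity criterion applied to the positive part of $T_{\mathcal{F}}$ restricted to a general minimal rational (or complete-intersection) curve: it outputs the algebraically integrable subfoliation $\mathcal{G}$ with rationally connected leaves directly, with $\operatorname{rank}\mathcal{G}$ equal to the rank of that positive part; one does not first manufacture a tangent covering family and then take its quotient.

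The second, and decisive, gap is the step you yourself label the ``hard part'': showing $\operatorname{rank}\mathcal{G}=r$. This \emph{is} the theorem, and your proposed mechanism --- that the foliation descended to the quotient, or some auxiliary transverse Fano foliation, ``should'' have index equal to its rank so that Theorem~\ref{index} forces $X\cong\mathbb{P}^n$ --- is not an argument and does not obviously work: the descended object lives on the base $Y_0$ of an almost holomorphic quotient, which is in general singular and not known to be Fano, so Theorem~\ref{index} (stated for foliations on projective manifolds) does not apply there, and $-K_{\mathcal{F}}\sim(r-1)H$ gives no divisibility of the descended anticanonical class by the descended rank. In the actual proof the hypothesis $X\not\cong\mathbb{P}^n$ enters much earlier and differently, through the non-ampleness of $g^{*}T_{\mathcal{F}}$ (\cite[Lemma 6.10]{fanofoliations}), which pins down the three splitting types of Lemma~\ref{decomposition}; the conclusion is then extracted case by case from those types and the structure of the $H$-rationally connected quotient. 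As written, your proposal correctly diagnoses where the difficulty sits but leaves it unresolved, so it does not constitute a proof.
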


In \cite{fanofoliations}, the authors also classify del Pezzo foliations under restrictions on the singularities. More precisely, they prove that if an algebraically integrable del Pezzo foliation $\mathcal{F}$ with rank $r \geq 3$ on a complex projective manifold $X$ has log canonical singularities along a general leaf and $T_{\mathcal{F}}$ is locally free along the closure of a general leaf, then $X$ is either a $\mathbb{P}^k$-bundle over $\mathbb{P}^m$, or $\rho(X) = 1$.

In this paper we remove the locally freeness assumption and extend this classification. This assumption is not natural in the classical theory of foliations, and moreover it is hard to check in practice. Furthermore, there are explicit examples of del Pezzo foliations that do not satisfy this property. For instance, consider $X=\mathbb{P}^N$, with $N \geq 3$ and $\mathcal{F}$ the foliation on $X$ given by a pencil of singular quadrics, generated by a double hyperplane and a quadric cone with vertex on this plane. Then one sees that $\mathcal{F}$ is a del Pezzo foliation which is log canonical along a general leaf. Here the general leaf $Q$ is a quadric cone with vertex $V$. If $T_{\mathcal{F}}$ is locally free along $Q$, then $T_Q$ and $(T_{\mathcal{F}})_{|Q}$ are two reflexive sheaves which are isomorphic in $Q\setminus \{V\}$, and therefore they are isomorphic if $\dim(Q) \geq 2$. But this would imply that $Q$ is regular. Thus we see that $T_{\mathcal{F}}$ is not locally free along $Q$.

 In our paper, we prove the following classification theorem:

\begin{thm}[Theorems \ref{thm1.0}, \ref{thm1.1} and \ref{picard1}]
	Let $X$ be a complex projective manifold of dimension $n$ and let $\mathcal{F}$ be a del Pezzo foliation of rank $r \geq 3$ on $X$ having log canonical singularities in the sense of McQuillan. Then 
\begin{itemize}
 \item Either $X$ is a $\mathbb{P}^m$-bundle over $\mathbb{P}^{k}$;
\item Or X is a projective space or a smooth quadric hypersurface.
\end{itemize} \label{main}
\end{thm}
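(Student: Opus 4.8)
We outline the strategy we would follow.

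\textbf{Reduction and set-up.} If $X \cong \mathbb{P}^n$ we are in the second alternative, so assume $X \not\cong \mathbb{P}^n$ from now on. Then Theorem \ref{algebraicdelpezzo} applies: $\mathcal{F}$ is algebraically integrable and its general leaves are rationally connected. Since $\iota_{\mathcal{F}} = r-1$, write $-K_{\mathcal{F}} = (r-1)L$ with $L$ ample. We would pass to the family of leaves --- a normal projective variety $Y$ together with a dominant rational map $X \dashrightarrow Y$ whose general fibres are the closures of the leaves --- and set $\overline F$ to be the closure of a general leaf, $\nu \colon \widehat F \to \overline F$ its normalization, and $\Delta_{\widehat F} \geq 0$ the different determined by $K_{\widehat F} + \Delta_{\widehat F} = \nu^*\big(K_{\mathcal{F}}|_{\overline F}\big)$. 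A first step is to verify that the hypothesis that $\mathcal{F}$ is log canonical in the sense of McQuillan forces the pair $\big(\widehat F, \Delta_{\widehat F}\big)$ to be log canonical.

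\textbf{Classifying the general leaf.} By construction $-\big(K_{\widehat F} + \Delta_{\widehat F}\big) = (r-1)\,\nu^*\big(L|_{\overline F}\big)$ is ample and divisible by $r-1 = \dim\widehat F - 1$; that is, $\big(\widehat F, \Delta_{\widehat F}\big)$ is a log del Pezzo pair of coindex one in dimension $r$. The plan is to classify these using a Kobayashi--Ochiai-type inequality for log canonical pairs (following Araujo--Druel): this should leave only $\widehat F \cong \mathbb{P}^r$ with $\Delta_{\widehat F} \in \{0, H, 2H, H_1 + H_2\}$, $\widehat F$ a smooth quadric with $\Delta_{\widehat F} = 0$, $\widehat F$ a quadric cone with $\Delta_{\widehat F} = 0$, and a short list of scroll-type pairs. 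The members with $\widehat F$ singular are exactly the ones on which $T_{\mathcal{F}}$ is not locally free along $\overline F$ --- precisely the quadric-cone phenomenon described in the introduction --- so the point is to treat these on the same footing as the smooth leaves, without the local-freeness assumption of \cite{fanofoliations}.

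\textbf{Reconstructing $X$.} With the general leaf known, we would reconstruct $X$ from the family of its leaves, distinguishing two cases. If $\rho(X) = 1$: combining $-K_X = -K_{\mathcal{F}} + c_1(N_{\mathcal{F}})$, with $N_{\mathcal{F}} = T_X/T_{\mathcal{F}}$, and the fact that the general leaf is itself a del Pezzo variety of dimension $r \geq 3$ sweeping out $X$, one bounds both $c_1(N_{\mathcal{F}})$ and the codimension $n-r$; this should push $\iota_X$ up to at least $n$, so that $X$ is a smooth quadric by Kobayashi--Ochiai, the value $\iota_X = n+1$ being excluded by our standing assumption. If $\rho(X) \geq 2$: the covering family of leaves, together with the minimal rational curves tangent to $\mathcal{F}$ contained in them and the positivity of $L$, should produce an extremal contraction $X \to B$ that one then identifies as a $\mathbb{P}^m$-bundle; analyzing the induced data on $B$ (again through tangent rational curves) would identify $B$ with $\mathbb{P}^k$. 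These three outcomes are what Theorems \ref{picard1}, \ref{thm1.0} and \ref{thm1.1} will record.

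\textbf{Main obstacle.} The hard part is the loss of local freeness of $T_{\mathcal{F}}$ along $\overline F$ in the singular-leaf cases, which is exactly what is assumed away in \cite{fanofoliations}. One cannot then identify $(T_{\mathcal{F}})|_{\overline F}$ with $T_{\overline F}$, so the deformation-theoretic and MMP-type arguments that rebuild $X$ must instead be run through the normalization $\widehat F \to \overline F$ together with the different $\Delta_{\widehat F}$, while carefully controlling the image of $\sing(\widehat F)$ in $X$ and the indeterminacy locus of $X \dashrightarrow Y$. The crux is to prove that the contraction produced when $\rho(X) \geq 2$ is a $\mathbb{P}^m$-bundle \emph{globally} --- not merely over an open subset --- and, when $\rho(X) = 1$, that the singularities of the leaves do not weaken the bound $\iota_X \geq n$.
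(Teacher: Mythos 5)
Your outline shares the broad skeleton of the paper's argument (algebraic integrability via Theorem \ref{algebraicdelpezzo}, classification of the general log leaf, then a dichotomy leading to a projective bundle or to $\rho(X)=1$ and an index bound), but two essential ideas are missing and one step is overclaimed. First, nowhere do you invoke the fact that the closures of the general leaves of an algebraically integrable Fano foliation with log canonical singularities all pass through a \emph{common} log canonical center (Proposition \ref{logcenter}). This is the engine of the whole second half of the proof: it is what forces $Y_0$ to be a point (hence $\rho(X)=1$) when the leaves are contained in the fibers of the rationally connected quotient, and it is what supplies a family of lines through a \emph{fixed} point so that Lemma \ref{boundforindex} applies. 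Without it, your $\rho(X)=1$ case has no mechanism for producing the concentrated family of rational curves that drives the index bound, and your $\rho(X)\geq 2$ case has no way to rule out that every leaf closure is a fiber of a nontrivial fibration. Second, your claim that in the $\rho(X)=1$ case one "pushes $\iota_X$ up to at least $n$" would fail: for the cone-type leaves (cases (6) and (7) of Proposition \ref{leaves}) the log canonical center can be a ruling of the cone, and the resulting family of lines only yields $\iota_X \geq n-1$. The paper must then separately exclude $\iota_X = n-1$ by running through Fujita's classification of del Pezzo manifolds of Picard number one (Theorem \ref{delpezzo}) and killing each candidate with the vanishing $\h^0(X,\Omega_X^{n-r}(n-r))=0$ (Lemma \ref{completeintersection}, Lemma \ref{grassmann}, via Remark \ref{normal}). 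This entire branch is absent from your plan.

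On organization: the paper does not split by $\rho(X)$ directly, but by whether ${T_{\mathcal{F}}}_{|X_0} \subset T_{X_0/Y_0}$ for the rationally connected quotients of minimal dominating families of rational curves. This is what makes the bundle case tractable: the splitting type of $g^*T_{\mathcal{F}}$ on a general minimal rational curve (Lemma \ref{decomposition}) combined with \cite[Proposition 7.13]{fanofoliations} gives the $\mathbb{P}^m$-bundle structure globally, answering the "global versus open subset" worry you flag; the delicate point is rather manufacturing an unsplit family $H'$ \emph{not} tangent to $\mathcal{F}$ when the first family is tangent, which is done by analyzing $\Supp(\Delta)=n^{-1}(\sing(\mathcal{F}))$ on the scroll-type leaves. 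Finally, the loss of local freeness that you correctly identify as the main obstacle is not overcome by reworking deformation theory through the normalization; it is overcome by quoting the classification of log leaves from \cite{characterization}, which removes the local-freeness and log-canonicity-along-a-leaf hypotheses at the source.
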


The main new ingredient used in the proof of the above theorem is the classification of log leaves of algebraically integrable del Pezzo foliations, given firstly in \cite{fanofoliations2} for log canonical singularities along a general leaf, and secondly in \cite{icm}, where it is shown that one can remove the log canonicity hypothesis. We also remark that the classification is incomplete: it remains to consider the case of del Pezzo foliations of rank $2$. 

Finally, as done in \cite{fanofoliations} and \cite{fanofoliations2}, one can classify the possible del Pezzo foliations in the ambient spaces appearing in Theorem \ref{main}. By a classification of Loray, Pereira and Touzet (\cite[Theorem 6.2]{lpt}), we know all del Pezzo foliations in the projective spaces, and by a classification of Araujo and Druel (\cite[Theorem 9.6]{fanofoliations} and \cite[Proposition 3.18]{fanofoliations2}), we know all del Pezzo foliations in projective space bundles and in quadric hypersurfaces.

Throughout the paper, we work over the field of complex numbers $\mathbb{C}$.

\subsection*{Acknowledgments}
The author would like to thank Carolina Araujo for several ideas contained in this paper and also for reading the drafts and suggesting improvements; St\'ephane Druel for reading the second draft, for finding a gap and for suggesting corrections; and Jorge Vit\'orio Pereira for useful advice on foliations. During the writing of this paper, the author received financial support from CNPq (process number 140605/2017-7), and from CAPES/COFECUB (process number 88887.192325/2018-00 / Ma 932/19).

\section{General background}

In this section, we define and discuss some properties of foliations on normal varieties. More precisely, we consider del Pezzo foliations, and we state some of their main properties, as proved in papers \cite{fanofoliations}, \cite{fanofoliations2} and \cite{characterization}. We also discuss families of rational curves, and del Pezzo manifolds with Picard number 1.

\begin{defi}
	Let $X$ be a normal variety. A foliation $\mathcal{F}$ on $X$ is defined by a saturated coherent subsheaf $T_{\mathcal{F}}$ of $T_X$ that is closed under the Lie bracket.
\end{defi}

\begin{rmk}
	In the classical setting, a foliation $\mathcal{F}$ is given by a decomposition of the variety by a union of leaves (see remark \ref{frobleaves}), and to this foliation we associate a tangent sheaf $T_{\mathcal{F}}$, which satisfies the properties of the above definition. Notationally it allows us to distinguish the pullbacks $f^*(\mathcal{F})$ and $f^*(T_{\mathcal{F}})$, for $f\colon Y \rightarrow X$ a morphism. The pullback $f^*(\mathcal{F})$ is a foliation on $Y$ and in general $T_{f^*(\mathcal{F})} \neq f^*T_{\mathcal{F}}$.
\end{rmk}

\begin{defi}
 Let $\varphi\colon X \dashrightarrow Y$ be a dominant rational map between normal varieties, with connected fibers. Let $\mathcal{F}$ be a foliation on $Y$. Let $X^0 \subset X$ and $Y^0 \subset Y$ be smooth open subsets such that $\varphi_{|X^0}\colon X^0 \rightarrow Y^0$ is a smooth morphism. The pullback $\varphi^*\mathcal{F}$ of $\mathcal{F}$ under $\varphi$ is defined as the foliation on $X$ such that ${T_{\varphi^*\mathcal{F}}}_{|X^0} = d\varphi_{|X^0}^{-1}({T_{\mathcal{F}}}_{|Y^0})$.
\end{defi}

We now give the definition of the rank of a foliation and its singular set.

\begin{defi}
 Let $\mathcal{F}$ be a foliation on a normal variety $X$. The rank $r$ of $\mathcal{F}$ is defined as the rank of the coherent sheaf $T_{\mathcal{F}}$, and the codimension $\codim(\mathcal{F})$ as $\dim(X) - r$.
\end{defi}

\begin{defi}[{{\cite[Definition 2.4]{fanofoliations}}}]
	Let $\mathcal{F}$ be a foliation of rank $r$ on a normal variety $X$. The inclusion $T_{\mathcal{F}} \hookrightarrow T_X$ induces a map $\varphi\colon (\Omega_X^r\otimes \det(T_{\mathcal{F}}))^{**} \rightarrow \mathcal{O}_X$. We define the singular locus of $\mathcal{F}$, denoted by $\sing(\mathcal{F})$, as the subscheme of $X$ with ideal sheaf given by the image of $\varphi$.
\end{defi}

\begin{rmk}
	By definition, $\sing(\mathcal{F})$ is a closed subscheme of $X$. Since $T_{\mathcal{F}}$ is saturated in $T_X$, $\sing(\mathcal{F})$ has codimension at least $2$ in $X$.	
\end{rmk}

\begin{rmk}
 Let $\mathcal{F}$ be a foliation of rank $r$ on a normal variety $X$ of dimension $n$. Then $\mathcal{F}$ is given by an injective homomorphism of sheaves $T_{\mathcal{F}}\rightarrow T_X$. Define the normal bundle of $\mathcal{F}$ as $\mathcal{N}_{\mathcal{F}} := (T_X/T_{\mathcal{F}})^{**}$. There is a short exact sequence of sheaves
 	\[ \begin{tikzcd}[column sep=large, row sep=large]
		0 \arrow[r] & T_{\mathcal{F}} \arrow[r] & T_X \arrow[r] & T_X/T_{\mathcal{F}} \arrow[r] & 0,
	\end{tikzcd}\]
and we see that $\det(\mathcal{N}_{\mathcal{F}}) \cong \det(T_X)\otimes \det(T_{\mathcal{F}})^*$. Moreover, dualizing gives a homomorphism of sheaves $\mathcal{N}_{\mathcal{F}}^* \rightarrow (\Omega_X^{1})^{**}$. Taking wedge product, we get a non-zero section of $(\Omega_X^{q} \otimes \det(\mathcal{N}_{\mathcal{F}}))^{**}$, where $q = n-r$. In particular we see that if $\h^0(X, (\Omega_X^{q} \otimes \det(\mathcal{N}_{\mathcal{F}}))^{**}) = 0$, then $X$ does not admit a foliation of rank $r$ and normal bundle $\mathcal{N}_{\mathcal{F}}$. \label{normal}
\end{rmk}

The next theorem is classical and the name ``foliation'' is derived from it. It is used to define the leaves of a foliation.

\begin{thm}[Frobenius]
	Let $X$ be a manifold of dimension $n$ and $\mathcal{F}$ a foliation on $X$ of rank $r$. If $x \notin \sing(\mathcal{F})$, then there exists an analytic neighborhood $U$ of $x$ in $X$, with coordinates $(x_1,\dots,x_n)$, such that
	$$(T_{\mathcal{F}})_{|U} \cong \bigoplus_{i=1}^r \mathcal{O}^{\an}_{U} \frac{\partial}{\partial x_i}. $$
\end{thm}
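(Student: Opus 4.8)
The statement is local and holomorphic, so I would begin by replacing $X$ with a small polydisc around $x$. The first observation is that the hypothesis $x\notin\sing(\mathcal F)$ means exactly that $T_{\mathcal F}$ is a subbundle of $T_X$ near $x$: unwinding the definition of $\sing(\mathcal F)$, the morphism $\varphi$ is surjective near $x$, i.e.\ if $v_1,\dots,v_r$ is a local frame of $T_{\mathcal F}$ then the $r$-vector $v_1\wedge\cdots\wedge v_r$ is nowhere zero in a neighborhood of $x$; hence $T_{\mathcal F}$ is locally free of rank $r$, the quotient $T_X/T_{\mathcal F}$ is locally free, and the values $v_1(x),\dots,v_r(x)$ are linearly independent. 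Thus the theorem reduces to the classical Frobenius integrability statement for an involutive holomorphic distribution $D\subset T_U$, which I would prove by induction on $r$.

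For $r=1$, I would pick a holomorphic vector field $v$ spanning $D$ with $v(x)\neq 0$, and invoke the holomorphic straightening (``flow box'') theorem: existence, uniqueness and holomorphic dependence on initial conditions for holomorphic ODEs produce analytic coordinates $(x_1,\dots,x_n)$ near $x$ in which $v=\partial/\partial x_1$, so that $(T_{\mathcal F})_{|U}=\mathcal O_U^{\mathrm{an}}\,\partial/\partial x_1$. This is the only analytic input; the rest is bracket bookkeeping, and the argument is formally identical to the smooth case.

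For the inductive step, I would choose holomorphic vector fields $v_1,\dots,v_r$ spanning $D$ near $x$, apply the $r=1$ case to $v_1$ to arrange $v_1=\partial/\partial x_1$, and then replace $v_2,\dots,v_r$ by $w_i:=v_i-(\text{$\partial/\partial x_1$-component of }v_i)\,v_1$, which have vanishing $\partial/\partial x_1$-component and still span $D$ together with $\partial/\partial x_1$. Since $D$ is closed under the Lie bracket, comparing $\partial/\partial x_1$-components shows $[\partial/\partial x_1,w_i]$ and $[w_i,w_j]$ are combinations of $w_2,\dots,w_r$ alone. In particular the $w_i$ are tangent to each slice $\{x_1=c\}$, and on the slice $H=\{x_1=0\}$ they span an involutive holomorphic distribution $D'$ of rank $r-1$; by the induction hypothesis there are coordinates $(x_2,\dots,x_n)$ on $H$ in which $D'=\langle\partial/\partial x_2,\dots,\partial/\partial x_r\rangle$. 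Extending these to coordinates $(x_1,\dots,x_n)$ on $U$ so that $H=\{x_1=0\}$, I would then use that $\partial/\partial x_1\in D$ and $D$ is involutive to conclude that the local flow of $\partial/\partial x_1$ preserves $D$; hence $D$ agrees with $\langle\partial/\partial x_1,\dots,\partial/\partial x_r\rangle$ on all of $U$ because the two distributions agree on the slice $H$ and both are invariant under $x_1$-translation. This gives the asserted normal form.

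The step that carries the real content is this last one: showing that involutivity forces $D$ to be invariant under the flow of $\partial/\partial x_1$, so that straightening on a single transversal slice propagates to a neighborhood; without the Lie-bracket hypothesis the induction does not close. Beyond that, there is no genuine obstacle — everything reduces to the holomorphic ODE existence theorem and elementary manipulations with vector fields and their brackets.
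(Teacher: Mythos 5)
The paper offers no proof of this statement at all: it records the Frobenius theorem as a classical black box (citing it only implicitly as ``classical'') and immediately uses it to define leaves. Your proposal supplies the standard proof in the holomorphic category --- induction on the rank, with the holomorphic flow-box theorem as the base case and the bracket computation showing that the flow of $\partial/\partial x_1$ preserves the distribution as the inductive engine --- and this is correct and is indeed the canonical argument; you also correctly identify the one step carrying real content. The only place you are too quick is the preliminary reduction: you justify that $x\notin\sing(\mathcal{F})$ makes $T_{\mathcal{F}}$ a subbundle near $x$ by evaluating a ``local frame $v_1,\dots,v_r$ of $T_{\mathcal{F}}$'', which presupposes the local freeness you are trying to establish. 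With the paper's definition of $\sing(\mathcal{F})$ via the contraction map $\varphi$, the honest argument is: the nonvanishing at $x$ of the induced section $\sigma$ of $(\Lambda^r T_X)\otimes\det(T_{\mathcal{F}})^{-1}$, together with the fact that $\sigma$ is decomposable on the dense open set where $T_{\mathcal{F}}$ is already a subbundle, forces $\sigma$ to be decomposable and nonzero near $x$ (the Pl\"ucker conditions are closed); the resulting rank-$r$ subbundle $\{v: v\wedge\sigma=0\}$ agrees with $T_{\mathcal{F}}$ on a dense open set, hence equals it by saturation. With that patch your reduction is complete, and the rest of your induction goes through as written.
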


\begin{defi}
    Let $X$ be a normal variety and $\mathcal{F}$ a foliation on $X$ of rank $r$. A leaf of $\mathcal{F}$ is the image of an injective morphism of analytic varieties $\varphi\colon F \rightarrow X\setminus (\sing(\mathcal{F}) \cup \sing(X))$ such that
    \begin{itemize}
        \item $F$ is connected and has dimension $r$;
        \item $D\varphi \colon T_F \rightarrow T_X$ is injective;
        \item $\im(D\varphi_p) = (T_{\mathcal{F}})_p$ for every $p \in F$.
    \end{itemize}
\end{defi}

\begin{rmk}
	Let $X$ be a normal variety and let $x \in X \setminus (\sing(X) \cup \sing(\mathcal{F}))$. Then, by Frobenius's theorem, there is an analytic neighborhood $U$ of $x$ with coordinates $(x_1,\dots,x_n)$, such that $(T_{\mathcal{F}})_{|U} = \ker(\pi)$, where $\pi\colon U \rightarrow V$ is the projection $\pi(x_1,\dots,x_n) = (x_{r+1},\dots,x_n)$. Thus, by taking an open cover $\{U_i\}$ of $X \setminus (\sing(X) \cup \sing(\mathcal{F}))$, and gluing the fibers of the $\pi_i\colon U_i \rightarrow V_i$ giving $\mathcal{F}$, we see that for each $x \in X \setminus (\sing(X) \cup \sing(\mathcal{F}))$, there is a leaf of $\mathcal{F}$ through $x$. \label{frobleaves}
\end{rmk}

\begin{defi}
	Let $X$ be a normal variety and $\mathcal{F}$ a foliation on $X$. Let $Y \subset X$ be an analytic subvariety such that $Y \not\subset \sing(X) \cup \sing(\mathcal{F})$. We say that $Y$ is tangent to $\mathcal{F}$ if the homomorphism $T_Y \rightarrow {T_X}_{|Y}$ factors through ${T_{\mathcal{F}}}_{|Y} \rightarrow {T_X}_{|Y}$; we say that $Y$ is invariant by $\mathcal{F}$, if ${T_{\mathcal{F}}}_{|Y} \rightarrow {T_X}_{|Y}$ factors through $T_Y \rightarrow {T_X}_{|Y}$. \label{tangentinvariant}
\end{defi}

One special class of foliations are those whose leaves are algebraic varieties.

\begin{defi}
	Let $X$ be a normal variety and $\mathcal{F}$ a foliation on $X$. We say that $\mathcal{F}$ is algebraically integrable if every leaf of $\mathcal{F}$ is algebraic (i.e. it is open in its Zariski closure).
\end{defi}

We can define the canonical divisor of a foliation in the same way that it is done in the classical case of varieties. We will see that several geometric properties of a foliation are translated into properties of its canonical divisor.

\begin{defi}
	Let $X$ be a normal variety and $\mathcal{F}$ a foliation on $X$. The canonical class of the foliation $\mathcal{F}$ is the linear equivalence class of Weil divisors $K_{\mathcal{F}}$ on $X$ such that $\mathcal{O}_X(-K_{\mathcal{F}}) \cong \det(T_{\mathcal{F}})$, and we call any divisor in this class a canonical divisor of $\mathcal{F}$.
\end{defi}

To study the birational geometry of foliations, one defines notions of singularities of $\mathcal{F}$ analogous to singularities of pairs used in modern study of birational geometry.

\begin{defi}[{{\cite[Definition I.1.5]{mcquillan}}}]
	Let $X$ be a normal variety and $\mathcal{F}$ a foliation on $X$ such that $K_{\mathcal{F}}$ is $\mathbb{Q}$-Cartier. Let $\pi\colon \tilde{X} \rightarrow X$ be a proper birational morphism, and denote by $\tilde{\mathcal{F}}$ the pullback of $\mathcal{F}$ along $\pi$. Write
	$$K_{\tilde{\mathcal{F}}} = \pi^*K_{\mathcal{F}} + \sum a(E_i,\mathcal{F}) E_i,$$
	where the $E_i$'s are the exceptional divisors of $\pi$. We say that $\mathcal{F}$ is terminal (resp. canonical, resp. log terminal, resp. log canonical) in the sense of McQuillan if $a(E_i,\mathcal{F}) > 0$ (resp. $\geq 0$, resp. $> -\epsilon(E_i)$, resp. $\geq -\epsilon(E_i)$), for all $\pi$ and all $E_i$, where $\epsilon(D) = 0$ if $D$ is invariant under $\tilde{\mathcal{F}}$, and $\epsilon(D) = 1$ otherwise. \label{sing}
\end{defi}

Concerning the case when $\mathcal{F}$ is algebraically integrable, we can relate the singularities of $\mathcal{F}$ with the singularities of the closure of a general leaf, made precise in the following definition and remark.

\begin{defi}[{{\cite[Definition 3.11]{article}}}]
	Let $X$ be a normal variety and $\mathcal{F}$ an algebraically integrable foliation on $X$ such that $K_{\mathcal{F}}$ is $\mathbb{Q}$-Cartier. Let $i\colon F \rightarrow X$ be the normalization of the closure of a general leaf of $\mathcal{F}$. Then there exists an effective $\mathbb{Q}$-divisor $\Delta$ on $F$ such that $K_F + \Delta \sim i^*K_{\mathcal{F}}$. The pair $(F,\Delta)$ is called a general log leaf of $\mathcal{F}$. \label{genlogleaf}
\end{defi}

\begin{rmk}[{{\cite[Proposition 3.11]{fanofoliations}}}]
	If $\mathcal{F}$ is algebraically integrable and has singularities of a given type as in definition \ref{sing}, then the general log leaf $(F,\Delta)$ also has singularities of the same type, seen as a pair.
\end{rmk}

The particularity of algebraically integrable Fano foliations with log canonical singularities is that there is a common point in the closure of the general leaf. More precisely, there is a common log canonical center of the log leaf, concept which we introduce in the following definition.

\begin{defi}[{{\cite[Definition 2.24]{kollar2008birational}}}]
 Let $(X,D)$ be a pair. A subvariety $W \subset X$ is called a log canonical center for $(X,D)$ if there exists a proper birational morphism from a normal variety $\mu\colon Y \rightarrow X$ and a prime divisor $E$ on $Y$ with discrepancy $a(E,X,D) \leq -1$ such that $\mu(E) = W$.
\end{defi}

\begin{prop}[{{\cite[Proposition 3.14]{fanofoliations2}}}]
	Let $X$ be a $\mathbb{Q}$-factorial projective variety and $\mathcal{F}$ an algebraically integrable Fano foliation on $X$. If the general log leaf $(F,\Delta)$ of $\mathcal{F}$ has log canonical singularities, then there is a closed irreducible subset $T \subset X$ such that there exists a log canonical center $S$ of $(F, \Delta)$ whose image in $X$ is $T$. \label{logcenter}
\end{prop}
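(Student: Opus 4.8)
The plan is to pass to the family of leaves, apply the canonical bundle formula for algebraically integrable foliations, and read off the log canonical center from the non-klt locus of the total space. First I would set up the family: since $\mathcal{F}$ is algebraically integrable, there are a normal projective variety $Y$ and a diagram with $\pi\colon U\to Y$ equidimensional (with normal general fibers) and $e\colon U\to X$ birational, such that for general $y\in Y$ the fiber $F_y=\pi^{-1}(y)$, with its induced morphism to $X$, is the normalization $i\colon F\to X$ of the closure of a leaf. By the theory of the general log leaf there is an effective $\mathbb{Q}$-divisor $\Delta_U$ on $U$, horizontal over $Y$, with $\Delta_U|_{F_y}=\Delta$ for general $y$, with $(U,\Delta_U)$ log canonical over the generic point of $Y$ (using Proposition \ref{logcenter}'s hypothesis and the remark after Definition \ref{genlogleaf}), and satisfying a canonical bundle formula $K_{U/Y}+\Delta_U\sim_{\mathbb{Q}}e^*K_{\mathcal{F}}+E$ with $E\geq 0$ effective, $e$-exceptional and $\pi$-vertical. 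Restricting over the generic point recovers $K_F+\Delta\sim i^*K_{\mathcal{F}}$, and since $-K_{\mathcal{F}}$ is ample and $i$ is finite, $-(K_F+\Delta)$ is ample: $(F,\Delta)$ is a log canonical pair of log Fano type.

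The key point — and, as far as I can see, the whole content of the proposition in its literal form — is that the general log leaf is \emph{not} klt (for a klt pair there are no log canonical centers, so the statement would be empty). This is where the Fano condition and the implicit hypothesis $\mathcal{F}\subsetneq T_X$ are used. If $(F,\Delta)$ were klt, one wants to contradict the \emph{global} positivity of $-K_{\mathcal{F}}$ on $X$. In the extreme case $\Delta=0$ the general leaf is closed and disjoint from the others in $X\setminus(\sing X\cup\sing\mathcal{F})$, so $e$ is an isomorphism near $F$ and $\mathcal{F}$ agrees there with the relative tangent sheaf $T_{X/Y}$ of the fibration $X\dashrightarrow Y$; then $-K_{\mathcal{F}}$ pulls back, on a resolution $X'\to Y$, to $-K_{X'/Y}$ modulo exceptional divisors, and $T_{X/Y}$ is never a Fano foliation when $\dim Y\geq 1$ (the anti-relative-canonical class fails to be ample — concretely, its restriction to a suitable multisection is non-positive), contradicting the ampleness of $-K_{\mathcal{F}}$. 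The general case, where $\Delta$ may have coefficients $<1$, is handled by the same circle of ideas applied to $(U,\Delta_U)$ together with a connectedness argument; granting it, $(F,\Delta)$ admits a log canonical center.

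It then remains to push such a center down to $X$ and to see that it is ``common'', as announced before the proposition. Let $\mathcal{Z}\subset U$ be the non-klt locus of $(U,\Delta_U)$ over a suitable open dense subset of $Y$; by the previous step $\mathcal{Z}$ dominates $Y$, and I would fix an irreducible component $\mathcal{S}$ of $\mathcal{Z}$ dominating $Y$ which restricts, on a general fiber, to a minimal log canonical center $S$ of $(F_y,\Delta_y)\cong(F,\Delta)$. Set $T:=\overline{e(\mathcal{S})}$, a closed irreducible subset of $X$; since $i\colon F\to X$ is a proper morphism of irreducible varieties, $S$ is a log canonical center of $(F,\Delta)$ with image $i(S)\subseteq T$. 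Because $e|_{F_y}$ is finite we have $\dim e(\mathcal{S}\cap F_y)=\dim\mathcal{S}-\dim Y\leq\dim T$, and the crucial claim is that $\mathcal{S}$ can be chosen so that equality holds: then $e(S)=e(\mathcal{S}\cap F_y)=T$ for every general $y$, so $T$ is independent of the chosen general leaf. Equality of dimensions says exactly that $e$ contracts $\mathcal{S}$ with fibers of dimension $\dim Y$, i.e.\ that $e(\mathcal{S})$ lies in the common locus $\bigcap_{\text{general }y}\overline{\ell_y}$ of the leaf closures.

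The main obstacle is therefore the non-kltness of the general log leaf together with the existence of this distinguished component $\mathcal{S}$ — equivalently, proving that for a Fano foliation the closures of the general leaves all pass through a fixed subvariety $T$ of $X$ along which the log leaf fails to be klt. I expect this to rest on two inputs: (i) that $T_{X/Y}$ is not a Fano foliation when $\dim Y\geq 1$ (so $e$ is not an isomorphism and $\Delta\neq 0$), and (ii) the Koll\'ar--Shokurov connectedness theorem for non-klt loci applied to the birational contraction $e$, using that $-(K_U+\Delta_U)$ is $e$-numerically trivial modulo the effective $e$-exceptional divisor $E$ and the $\pi$-vertical correction $\pi^*K_Y$. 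The delicate step is to upgrade ``the non-klt locus meets every general leaf'' to ``some component of the non-klt locus collapses under $e$ onto a fixed subvariety of $X$'': a priori the non-klt centers of different general log leaves might sweep out a divisor in $X$, and one must exploit the ampleness of $-K_{\mathcal{F}}$ — through the canonical bundle formula and connectedness — to rule this out.
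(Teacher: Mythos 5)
A preliminary remark on the comparison: the paper does not prove Proposition \ref{logcenter} at all --- it is imported verbatim from \cite[Proposition 3.14]{fanofoliations2} and used as a black box --- so there is no in-paper argument to measure your proposal against, and I assess it on its own terms. Your setup is the correct one (family of leaves $\pi\colon U\to Y$ with birational evaluation $e\colon U\to X$, the divisor $\Delta_U$ with $K_{U/Y}+\Delta_U\sim_{\mathbb{Q}}e^*K_{\mathcal{F}}+E$), and you correctly isolate the two claims that constitute the entire content of the proposition: (i) the general log leaf is not klt, and (ii) some log canonical center is contracted by $e$ onto a subvariety $T$ independent of the chosen general leaf.

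The problem is that neither claim is actually established: both are explicitly deferred (``granting it'', ``I expect this to rest on'', ``the delicate step is''), and the tools you name would not close them as stated. For (i), fiberwise or relative nefness of $-(K_{U/Y}+\Delta_U)$ cannot suffice: the trivial family $F\times C\to C$ with $F$ Fano and $\Delta_U=0$ has $-K_{(F\times C)/C}$ nef with klt fibers. One must genuinely use that $e$ is birational; the argument of Araujo and Druel restricts the family to a general complete-intersection curve $C\subset Y$, where $e|_{U_C}$ becomes generically finite, so that $-(K_{U_C/C}+\Delta_{U_C})\sim_{\mathbb{Q}}e_C^*(-K_{\mathcal{F}})-E_C$ is big and not merely nef, and then invokes a semipositivity/weak-positivity theorem for direct images (of Viehweg--Kawamata type) to extract both the non-kltness and the collapsing of a log canonical center in one stroke. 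Your substitute --- Koll\'ar--Shokurov connectedness applied to the birational morphism $e$ --- does not apply as described: $-(K_U+\Delta_U)$ differs from $e^*(-K_{\mathcal{F}})$ by $E-\pi^*K_Y$, and $-\pi^*K_Y$ is neither $e$-exceptional nor of controlled sign, so $-(K_U+\Delta_U)$ need not be $e$-nef and $e$-big. For (ii), the assertion that a component $\mathcal{S}$ of the non-klt locus can be chosen with $\dim e(\mathcal{S})=\dim\mathcal{S}-\dim Y$ is precisely the statement to be proved, not a consequence of the dimension bookkeeping you give; as you yourself note, a priori the non-klt centers of the various leaves could sweep out a divisor. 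As it stands the proposal is a plausible roadmap with the two decisive steps missing; since the proposition is in any case an external input to this paper, the honest options are to cite \cite[Proposition 3.14]{fanofoliations2} or to supply the semipositivity argument in full.
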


Our proof of the classification of del Pezzo foliations mainly uses two tools: Fujita's classification of del Pezzo manifolds and the theory of families of rational curves on varieties. In the following theorem, we state Fujita's classification, considering only the cases of Picard number $1$, which are the cases we will be interested in our proof.

\begin{thm}[{{\cite{Fujita1980} and \cite{Fujita1981}}}]
	Let $X$ be a Fano manifold of dimension $n$ with $\iota_X = n - 1$ and $\rho(X) = 1$. Then $X$ is isomorphic to one of the following:
	\begin{enumerate}
		\item A cubic hypersurface in $\mathbb{P}^{n+1}$.
		\item An intersection of two quadric hypersurfaces in $\mathbb{P}^{n+2}$.
		\item A linear section of the Grassmannian $G(2,5) \subset \mathbb{P}^9$ under the Plücker embedding.
		\item A hypersurface of degree $4$ in the weighted projective space $\mathbb{P}(2,1,\dots,1)$.
		\item A hypersurface of degree $6$ in the weighted projective space $\mathbb{P}(3,2,1,\dots,1)$.
	\end{enumerate} \label{delpezzo}
\end{thm}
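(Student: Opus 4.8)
Write $H$ for the ample generator of $\Pic(X)$, so that $-K_X = (n-1)H$ by the index hypothesis, and set $d := H^n$, the \emph{degree} of $(X,H)$. I would run Fujita's ``ladder'' argument: produce a chain of smooth subvarieties $X = X_n \supset X_{n-1} \supset \dots \supset X_1$, each $X_{j-1}$ a general member of $|H|_{X_j}|$, and note by adjunction that $-K_{X_{j-1}} = (j-2)\,H|_{X_{j-1}}$; thus every $X_j$ is again a del Pezzo manifold of degree $d$, $X_2$ is a del Pezzo surface, and $X_1$ is an elliptic curve with $\deg(H|_{X_1}) = d$ (so $(X,H)$ has ``sectional genus $1$''). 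This both sets up an induction on $n$ and pins down the numerical invariants. The two facts that make the ladder run are cohomological vanishing and base-point-freeness of $|H|$ (resp.\ of $|2H|$ when $d=1$).

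\textbf{Cohomological input.} Since $K_X = -(n-1)H$, Kodaira vanishing gives $H^i(X,\mathcal{O}_X(tH)) = 0$ for $i>0$ and $t\geq -(n-2)$; together with $H^i(X,\mathcal{O}_X)=0$ this lets Riemann--Roch compute every $h^0(X,\mathcal{O}_X(tH))$ with $t\geq 0$, and via the ladder reduces the computation to $X_1$. One obtains $h^0(X,\mathcal{O}_X(H)) = d+n-1$ for $d\geq 3$. A Bertini argument, fed by the same vanishing, propagates smoothness and irreducibility of the general members down the ladder and yields that $|H|$ is base-point-free when $d\geq 2$. The genuinely delicate point at this stage is the exceptional behaviour for $d=1,2$, where $|H|$ is not very ample and one must instead describe the morphisms attached to $|H|$ and $|2H|$ directly.

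\textbf{Bounding the degree.} Using $\rho(X)=1$, the cone of curves is a single extremal ray, generated by a rational curve $C$ with $-K_X\cdot C \leq n+1$ by the bound on lengths of extremal rays; hence $(n-1)(H\cdot C)\leq n+1$ and $H\cdot C\in\{1,2\}$ for $n\geq 3$. Studying the family of these minimal rational curves, together with the ladder and the sectional-genus-$1$ constraint, gives $d\leq 8$, and one excludes $6\leq d\leq 8$ under $\rho(X)=1$: the del Pezzo manifolds of degrees $6$ and $7$ are products, projectivized bundles, or one-point blow-ups, all with $\rho\geq 2$; and in degree $8$ the only possibility is $(\mathbb{P}^3,\mathcal{O}(2))$, which is ruled out here because its index is $n+1$, not $n-1$. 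Thus $1\leq d\leq 5$.

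\textbf{Identification in each degree.} For $d\geq 3$ the system $|H|$ is very ample and embeds $X$ as a nondegenerate variety of degree $d$ and codimension $d-2$ in $\mathbb{P}^{d+n-2}$, i.e.\ with $\deg = \codim + 2$. If $d=3$ this is a cubic hypersurface in $\mathbb{P}^{n+1}$; if $d=4$, counting the quadrics through $X$ shows its homogeneous ideal is generated by two of them, so $X$ is a complete intersection of two quadrics in $\mathbb{P}^{n+2}$; if $d=5$, a syzygy/structure-theorem analysis of the embedding identifies $X$ with a linear section of $G(2,5)\subset\mathbb{P}^9$. If $d=2$, $|H|$ defines a double cover $X\to\mathbb{P}^n$ with quartic branch divisor, realizing $X$ as a quartic hypersurface in $\mathbb{P}(2,1,\dots,1)$; if $d=1$, $|H|$ has a single base point and $|2H|$ defines a double cover of the cone over the Veronese surface branched along a sextic, realizing $X$ as a sextic hypersurface in $\mathbb{P}(3,2,1,\dots,1)$. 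I expect the two main obstacles to be, first, the degree bound $d\leq 5$, which needs the global Mori-theoretic input interlocked with the vanishing, and second, the case $d=5$, where extracting the Grassmannian structure out of the projective embedding is the most technical step.
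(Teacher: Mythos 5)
This statement is quoted in the paper as Fujita's classification and is not proved there --- the paper simply cites \cite{Fujita1980} and \cite{Fujita1981} --- so the relevant comparison is with Fujita's original argument, and your outline is a faithful summary of it: the $\Delta$-genus/ladder method, the reduction to an elliptic curve section, the computation $h^0(H)=d+n-1$, the bound $1\leq d\leq 5$ under $\rho(X)=1$ and $\iota_X=n-1$, and the case-by-case identification are all the standard steps (your use of lengths of extremal rays to get $H\cdot C\in\{1,2\}$ is a mildly modernized gloss on the degree bound, but leads to the same place). The sketch is correct as a roadmap; the genuinely hard points you defer --- base-point-freeness and very ampleness of $|H|$ for $d\geq 3$, the exceptional analysis for $d=1,2$, and especially extracting the $G(2,5)$ structure when $d=5$ --- are exactly where the bulk of Fujita's work lies, so this should be regarded as an outline of the cited proof rather than a self-contained one.
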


The following two lemmas show that the manifolds given in theorem \ref{delpezzo} do not admit del Pezzo foliations. More precisely, if $X$ is a Fano manifold of Picard number 1 with $\iota_X = n - 1$, then $-K_X = (n-1)A$, where $n = \dim(X)$ and $A$ is the ample generator of $\Pic(X)$. If $X$ admits a del Pezzo foliation $\mathcal{F}$, then, since $-K_{\mathcal{F}} = (r-1)A$, we have by remark \ref{normal} that $\mathcal{F}$ induces a non-zero element of $\h^0(X,\Omega_X^{n-r}(n-r))$ (notice that $\det(\mathcal{N}_{\mathcal{F}}) = (n-1)A - (r-1)A = (n-r)A$).

\begin{lem}[{{\cite[Lemma 5.17]{fanodist}}}]
	Let $X$ be a smooth complete intersection in a weighted projective space $\mathbb{P}(a_0,\dots,a_N)$ of dimension $n \geq 3$ defined by homogeneous polynomials $f_1,\dots,f_c$ of degrees $2\leq d_1\leq\dots\leq d_c$. Suppose that $X_i := \{f_1=\dots=f_i=0\}$ is smooth for every $i \in \{1,\dots,c\}$. Then $\h^0(X, \Omega_X^q(q)) = 0$ for each $1 \leq q \leq n-1$. \label{completeintersection}
\end{lem}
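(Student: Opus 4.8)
The plan is to reduce the vanishing $\h^0(X,\Omega^q_X(q))=0$ for a smooth complete intersection $X$ of dimension $n$ in a weighted projective space $\mathbb{P} := \mathbb{P}(a_0,\dots,a_N)$ to a cohomological computation that is independent of the foliation, proceeding by induction on the number $c$ of equations cutting out $X$. The base case is $c = 0$, i.e. $X = \mathbb{P}$ itself; here one must know that $\h^0(\mathbb{P}(a_0,\dots,a_N),\Omega^q_{\mathbb{P}}(q)) = 0$ for $1 \le q \le N-1$, which is the weighted analogue of Bott's formula and can be extracted from the (weighted) Euler sequence $0 \to \Omega^1_{\mathbb{P}} \to \bigoplus_i \mathcal{O}_{\mathbb{P}}(-a_i) \to \mathcal{O}_{\mathbb{P}} \to 0$ together with its exterior powers. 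I would be careful that reflexivity issues on the (possibly singular) ambient weighted projective space do not cause trouble — working on the smooth locus, or on the orbifold/stack $[\mathbb{A}^{N+1}\setminus\{0\}/\mathbb{G}_m]$, makes the Euler sequence and Bott-type vanishing clean.

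For the inductive step, suppose $Y := X_{i-1}$ is smooth of dimension $m$ and $X := X_i = Y \cap \{f_i = 0\}$ is a smooth hypersurface in $Y$ cut out by a section of $\mathcal{O}_Y(d_i)$ with $d_i \ge 2$. I would use the conormal sequence $0 \to \mathcal{O}_X(-d_i) \to \Omega^1_Y|_X \to \Omega^1_X \to 0$ and its induced filtration on exterior powers: $\Lambda^q$ of the middle term has a two-step filtration with graded pieces $\Omega^q_X$ and $\Omega^{q-1}_X(-d_i)$. Twisting by $\mathcal{O}_X(q)$ and taking cohomology, it suffices to show $\h^0(X,(\Omega^q_Y|_X)(q)) = 0$, for which I restrict the vanishing on $Y$ via the Koszul resolution $0 \to \mathcal{O}_Y(-d_i) \to \mathcal{O}_Y \to \mathcal{O}_X \to 0$ tensored with $\Omega^q_Y(q)$: this reduces to $\h^0(Y,\Omega^q_Y(q)) = 0$ and the vanishing of $\h^1(Y,\Omega^q_Y(q-d_i))$. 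The first is the inductive hypothesis (note $q \le n-1 \le m-1$); the second, a first-cohomology vanishing in a \emph{negative} or low twist, should follow from Kodaira–Nakano–type vanishing on the smooth Fano-like variety $Y$, or again by pushing the statement down the induction and onto $\mathbb{P}$ via repeated use of the Euler and Koszul sequences.

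The main obstacle I anticipate is precisely the bookkeeping of these auxiliary $\h^1$ (and possibly $\h^2$) vanishing statements: the naive induction on $\h^0$ alone does not close, so one must strengthen the inductive hypothesis to a package of vanishing statements $\h^j(X,\Omega^q_X(t)) = 0$ for an appropriate range of $(j,q,t)$ — essentially a Nakano-type vanishing theorem for weighted complete intersections — and then verify that each conormal/Koszul sequence respects this range. The weighted setting adds the nuisance that $\mathcal{O}_{\mathbb{P}}(1)$ need not be a line bundle and that standard vanishing theorems require mild hypotheses (quasi-smoothness, well-formedness), so I would either cite an existing Nakano vanishing for quasi-smooth weighted complete intersections or prove the needed range directly by the Euler-sequence computation on $\mathbb{P}$. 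The degree hypothesis $d_i \ge 2$ and the dimension bound $q \le n-1$ are exactly what keep all the twists in the safe range, so the proof is really a careful induction rather than a single slick argument.
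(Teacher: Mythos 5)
The paper does not prove this lemma; it is quoted verbatim from \cite[Lemma 5.17]{fanodist}, so there is no internal proof to compare against. Your overall strategy --- induction on $c$ via conormal sequences, with the base case supplied by a Bott-type vanishing on $\mathbb{P}(a_0,\dots,a_N)$ coming from the weighted Euler sequence, and a strengthened Nakano-type inductive hypothesis --- is indeed the standard route and essentially what the cited source does. But as written there are two genuine problems. First, the reduction step is stated backwards: in the conormal sequence $0 \to \mathcal{O}_X(-d_i) \to \Omega^1_Y|_X \to \Omega^1_X \to 0$ the sheaf $\Omega^1_X$ is the \emph{quotient}, so the induced filtration on exterior powers reads $0 \to \Omega^{q-1}_X(q-d_i) \to (\Omega^q_Y|_X)(q) \to \Omega^q_X(q) \to 0$, and it does \emph{not} suffice to show $\h^0(X,(\Omega^q_Y|_X)(q))=0$; you also need $\h^1(X,\Omega^{q-1}_X(q-d_i))=0$. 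You acknowledge this in your last paragraph, but the sentence ``it suffices to show\dots'' is false as stated and is where the naive induction breaks.

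Second, the deferred ``package'' of vanishings $\h^j(X,\Omega^q_X(t))=0$ is not an administrative afterthought --- it \emph{is} the content of the lemma, and your proposal never pins down the range of $(j,q,t)$ for which the induction closes, nor verifies that the conormal and Koszul sequences preserve that range (this is where the hypotheses $d_i\geq 2$ and $q\leq n-1$ enter, and where the assumption that \emph{every} intermediate $X_i$ is smooth is used, so that each conormal sequence is an exact sequence of locally free sheaves). The required inputs are the Bott--Steenbrink--Danilov/Dolgachev vanishing on weighted projective space (in particular $\h^0(\mathbb{P},\Omega^q_{\mathbb{P}}(t))=0$ for $t\leq q$, $q\geq 1$, together with the higher-cohomology statements) and a Flenner-type vanishing theorem for the intermediate weighted complete intersections; without stating and propagating these explicitly, the argument does not yet constitute a proof. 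You also rightly flag, but do not resolve, the issue that a smooth $X$ may still meet $\sing(\mathbb{P}(a_0,\dots,a_N))$, so that $\Omega^1_{\mathbb{P}}|_{X_1}$ need not be locally free; this must be handled (e.g.\ by working with reflexive differentials on a well-formed ambient space, or on the quotient stack) before the first conormal sequence can even be written down.
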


\begin{lem}
	Let $G = G(2,5) \subset \mathbb{P}^9$ be the Grassmannian of planes in $\mathbb{C}^5$. Denote by $G_i$ a general linear section of $G$ of codimension $i$. Then, for $i \in \{0,1,2\}$, $G_i$ does not admit a del Pezzo foliation of rank $r \geq 3$. \label{grassmann}
\end{lem}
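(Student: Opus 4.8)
The plan is to show that for each of the three relevant Grassmannian sections $G_i$ with $i\in\{0,1,2\}$, the cohomology group $\h^0(G_i,\Omega_{G_i}^{q}(q))$ vanishes for all $q$ in the range that matters, and then invoke Remark \ref{normal} exactly as in the setup preceding Lemma \ref{completeintersection}. Concretely, $G=G(2,5)$ is a Fano $6$-fold with $\iota_G=5=\dim G-1$, and its hyperplane sections under the Plücker embedding are again del Pezzo; a codimension-$i$ general linear section $G_i$ has dimension $6-i$ and index $5-i$, so if $G_i$ carried a del Pezzo foliation $\mathcal{F}$ of rank $r\ge 3$, Remark \ref{normal} would produce a nonzero section of $\Omega_{G_i}^{q}(q)$ with $q=\dim(G_i)-r=(6-i)-r$. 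Since $r\ge 3$ and $\dim(G_i)\le 6$, we have $1\le q\le \dim(G_i)-3\le \dim(G_i)-1$, so it suffices to prove $\h^0(G_i,\Omega_{G_i}^q(q))=0$ for $1\le q\le \dim(G_i)-1$, where here $\mathcal{O}_{G_i}(1)$ denotes the ample generator of $\Pic(G_i)$ (which is $\Pic G$ restricted, by Lefschetz, since $\dim G_i\ge 4$).

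The first step is to handle $G_0=G$ itself. One way is a direct computation of $\h^0(G,\Omega_G^q(q))$ using the known structure of the (co)tangent bundle of the Grassmannian: $T_G=\mathcal{S}^\vee\otimes\mathcal{Q}$ where $\mathcal{S}$ is the rank-$2$ tautological subbundle and $\mathcal{Q}$ the rank-$3$ quotient, so $\Omega_G^q$ decomposes via the Cauchy/Littlewood--Richardson formula into Schur functors, and one checks with Bott's theorem that none of the resulting bundles, twisted by $\mathcal{O}_G(q)=\det(\mathcal{S}^\vee)^{\otimes q}$, has sections. Alternatively, and more in the spirit of the cited Lemma \ref{completeintersection}, I would prefer to realize a general hyperplane section of $G$ as living inside a smooth ambient variety whose cotangent cohomology is controlled, but the cleanest path is likely the Bott-theoretic computation on $G$ directly, then pass to sections. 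For the sections $G_1$ and $G_2$, I would use the conormal/restriction sequences: from $0\to\mathcal{O}_{G_1}(-1)\to \Omega_G^1|_{G_1}\to\Omega_{G_1}^1\to 0$ one gets, by taking wedge powers, exact sequences relating $\Omega_{G_1}^q(q)$ to $\Omega_G^p(p)|_{G_1}$ for $p\le q$; combining these with the vanishing on $G$ (via the ideal sheaf sequence $0\to\mathcal{O}_G(p-q-1)\to\mathcal{O}_G(p-q)\to\mathcal{O}_{G_1}(p-q)\to 0$ and Kodaira--Nakano or Bott vanishing on $G$) yields the vanishing on $G_1$, and then iterating gives $G_2$.

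The main obstacle I expect is the bookkeeping in the Bott/Borel--Weil--Bott computation for $\Omega_G^q(q)$: one must list all weights appearing in the Schur decomposition of $\wedge^q(\mathcal{S}\otimes\mathcal{Q}^\vee)$ for $q$ up to $5$, twist each by the determinant of $\mathcal{S}^\vee$ raised to the $q$-th power, and verify that after adding the half-sum of positive roots none of the resulting weights is dominant regular with the right chamber to contribute to $H^0$ — and in particular that $H^0$ and also the auxiliary groups $H^1$ of the various twists (needed to propagate vanishing through the restriction sequences) all vanish. A secondary subtlety is making sure the Picard-group identifications and the range of $q$ are exactly right for the borderline case $G_2$, which is a fourfold of index $3$: there $r\ge 3$ forces $q=4-r\le 1$, so actually only $q=1$ need be checked, i.e. $\h^0(G_2,\Omega_{G_2}^1(1))=0$, which follows immediately from $\h^0(G_2,T_{G_2}^\vee\otimes\mathcal{O}(1))=0$ since $G_2$ is Fano of index $3>1$ and has no global vector fields dual obstruction — but I would still phrase the argument uniformly via the restriction sequences to keep the exposition short. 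Once the vanishing is established, the conclusion that $G_i$ admits no del Pezzo foliation of rank $r\ge 3$ is immediate from Remark \ref{normal}.
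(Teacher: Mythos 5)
Your reduction is the same as the paper's: via Remark \ref{normal}, a del Pezzo foliation of rank $r\ge 3$ on $G_i$ would give a nonzero section of $\Omega_{G_i}^q(q)$ with $q=\dim(G_i)-r\in\{1,2,3\}$, so it suffices to prove these vanishings. Where you diverge is in how the vanishing is established: the paper does no computation at all, but simply cites three results from the literature, one for each value of $q$ (Lemma 6.1 of the reference behind Lemma \ref{completeintersection} for $q=1$, a result of Araujo--Druel for $q=2$, and a lemma of Peternell for $q=3$). Your plan --- Borel--Weil--Bott on $G(2,5)$ using $\Omega^1_G=\mathcal{S}\otimes\mathcal{Q}^\vee$ and the Cauchy decomposition of $\wedge^q$, then conormal and Koszul sequences to descend to $G_1$ and $G_2$ --- would give a self-contained proof, and the computations do work out (e.g.\ all weights occurring in $\Omega_G^q(q)$ for $q=1,2,3$ become singular after adding $\rho$, so even the full cohomology vanishes).

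However, as written the proposal has a genuine gap beyond the fact that none of the Bott bookkeeping is actually carried out. You assert that the descent to $G_1$ and $G_2$ goes through because ``the auxiliary groups $H^1$ of the various twists\dots all vanish.'' They do not: the restriction sequence $0\to\Omega^1_G\to\Omega^1_G(1)\to\Omega^1_G|_{G_1}(1)\to 0$ forces $H^1(G,\Omega^1_G)=\mathbb{C}$ (the hyperplane class) into the computation, so $\h^0(G_1,\Omega^1_G|_{G_1}(1))$ is not zero but one-dimensional, and one must check that this space is exactly the image of $\h^0(\mathcal{O}_{G_1})$ under the conormal inclusion (i.e.\ it is spanned by $dF$ for $F$ the linear equation), whence $\h^0(G_1,\Omega^1_{G_1}(1))=0$; the same point recurs for $G_2$. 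Relatedly, your parenthetical claim that $\h^0(G_2,\Omega^1_{G_2}(1))=0$ ``follows immediately'' from $G_2$ being Fano of index $3$ is unjustified --- there is no general vanishing of $\h^0(\Omega^1_X(1))$ for Fano manifolds of index $>1$, and this statement is precisely the nontrivial fact (no corank-one del Pezzo distribution on the quintic del Pezzo fourfold) that the paper outsources to the literature. With these two points repaired, your route does yield a complete, citation-free proof.
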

\begin{proof}
	The result follows from \cite[Lemma 6.1]{fanodist} for codimension 1, from \cite[page 205, number (3)]{araujo2017codimension} for codimension 2 and from \cite[Lemma 0.1]{Peternell1995} for codimension 3.
\end{proof}

We now consider the concept of families of rational curves.

\begin{defi}[{{\cite[Definition IV.2.1]{kollar1999rational}}}]
	Let $X$ be a normal projective variety. By a family of rational curves $H$ on $X$ we mean an irreducible subvariety of the normalized scheme $\RatCurves^n(X)$ parametrizing rational curves on $X$. We denote by $\Locus(H)$ the locus of $X$ swept by the curves of $H$. We say that $H$ is unsplit if it is proper, and minimal if, for a general point $x \in \Locus(H)$, the closed subset $H_x$ of $H$ parametrizing curves through $x$ is proper. We say that $H$ is dominating if $\overline{\Locus(H)} = X$.
\end{defi}

One important property of families of rational curves is the existence of quotients:

\begin{defi}[{{\cite[Definition IV.3.2]{kollar1999rational}}}]
	Let $H$ be a family of rational curves on $X$. Let $\overline{H}$ denote the closure of $H$ in $\Chow(X)$. Two points $x,y \in X$ are said to be $H$-equivalent if they can be connected by a chain of $1$-cycles from $\overline{H}$. If a rational curve $C$ from $H$ has normalization $g\colon \mathbb{P}^1\rightarrow C$, then we denote the corresponding point in $H$ by $[C]$ or $[g]$. \label{family}
\end{defi}

\begin{thm}[\cite{Campana1992},\cite{Kollar1992b}]
	The above relation is an equivalence relation on $X$. Moreover there exists a proper surjective equidimensional morphism $\pi_0\colon X_0 \rightarrow Y_0$ from a dense open subset of $X$ onto a normal variety whose fibers are $H$-equivalence classes. We call this map the $H$-rationally connected quotient of $X$. \label{ratquotient}
\end{thm}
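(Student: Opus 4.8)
The plan is to treat the two assertions separately: first that $H$-equivalence is an equivalence relation, and then the existence of a geometric quotient over a dense open subset.

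For the first assertion, reflexivity is immediate (a point is joined to itself by the trivial chain, or by a curve of $H$ through it when $x\in\Locus(H)$, and otherwise its class is $\{x\}$), symmetry follows by reversing a chain of $1$-cycles, and transitivity is the concatenation of two chains: if $x$ is joined to $y$ by a chain of members of $\overline{H}$ and $y$ is joined to $z$ by another, the union of the two chains joins $x$ to $z$. So the relation is an equivalence relation with no hypothesis on $H$.

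The substance is the second assertion, and the point where properness of $\overline{H}$ (the closure of $H$ in $\Chow(X)$) enters is the following. For a fixed integer $i$ and a point $x\in X$, let $Z_i(x)\subset X$ be the set of points that can be joined to $x$ by a chain of at most $i$ members of $\overline{H}$. Using the universal cycle over $\overline{H}$ together with its two evaluation maps, $Z_i(x)$ is the image of a projective variety, namely an iterated fibre product of copies of that universal family, and is therefore closed in $X$. The dimensions $\dim Z_i(x)$ are non-decreasing in $i$ and bounded by $\dim X$, so I would argue that for $x$ general the $Z_i(x)$ are irreducible and that, as soon as $\dim Z_{i+1}(x)=\dim Z_i(x)$, in fact $Z_{i+1}(x)=Z_i(x)$; hence there is a uniform $N$ with $Z_N(x)=Z_{N+1}(x)=\cdots$ for general $x$, and this common closed set $Z(x)$ is precisely the $H$-equivalence class of $x$ once we discard a proper closed subset of $X$. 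Put $d:=\dim Z(x)$ for general $x$.

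Finally one upgrades this set-theoretic partition to a morphism. Shrinking to a dense open $X_0$ on which the degree of $Z(x)$ is locally constant and $Z(x)$ genuinely equals the equivalence class, the assignment $x\mapsto [Z(x)]\in\Chow(X)$ is a well-defined morphism $X_0\to\Chow(X)$, induced by the same iterated fibre-product family of cycles used above. Let $Y_0$ be the normalization of the image and $\pi_0\colon X_0\to Y_0$ the induced map; after a further careful shrinking of $X_0$ (replacing it by the locus over which this map behaves properly) we may assume $\pi_0$ is proper, surjective, and has all fibres of dimension $d$, hence equidimensional, with the $H$-equivalence classes as fibres. This $\pi_0$ is the desired $H$-rationally connected quotient. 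I expect the main obstacle to be the middle step: making precise that ``reachable by a bounded chain'' is a closed condition and that these dimensions stabilise to the genuine equivalence classes on a dense open set — the Noetherian/generic-flatness bookkeeping — and then verifying that the resulting classifying map to $\Chow(X)$ is an actual morphism rather than merely a rational map, properness of $\overline{H}$ being exactly what keeps the equivalence classes from degenerating at the boundary and forces stabilisation after finitely many steps.
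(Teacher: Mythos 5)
The paper offers no proof of this statement: it is quoted from Campana and Koll\'ar--Miyaoka--Mori (it is \cite[IV.4.16]{kollar1999rational}, resting on the quotient theorem IV.4.13 for proper proalgebraic relations), so there is no in-house argument to compare yours against; your sketch follows the standard construction. Within your sketch, however, one step is wrong as stated: you deduce $Z_{i+1}(x)=Z_i(x)$ from $\dim Z_{i+1}(x)=\dim Z_i(x)$ by asserting that $Z_i(x)$ is irreducible for general $x$. That irreducibility is not justified and is not automatic --- already $Z_1(x)$, the union of all cycles of $\overline{H}$ through $x$, is reducible whenever the evaluation map of the universal family has reducible fibres over a general point --- and for a reducible closed set, equality of dimensions does not force equality of sets. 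The correct (and standard) route to stabilisation needs neither dimensions nor irreducibility: the loci $\mathcal{C}_i\subset X\times X$ of pairs joined by a chain of length at most $i$ are closed, being images of proper iterated fibre products of the universal family over $\overline{H}$ (this is exactly where properness of $\overline{H}$, automatic from taking the closure in $\Chow(X)$, enters), and they form an increasing chain of closed subsets of the Noetherian space $X\times X$, hence stabilise at some $\mathcal{C}_N$. This gives a uniform $N$ valid for every $x$ and shows that every $H$-equivalence class is closed.

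The second and larger issue is that your final paragraph --- that the classes $Z(x)$ fit into an algebraic family of cycles, that $x\mapsto[Z(x)]$ is an actual morphism to $\Chow(X)$ on a dense open set, and above all that after shrinking one may take $X_0$ to be a union of \emph{entire} equivalence classes on which $\pi_0$ is proper and equidimensional --- is precisely the hard content of the theorem, and you only assert it. The phrase ``after a further careful shrinking we may assume $\pi_0$ is proper'' hides the key point that the fibres of the resulting map are the full equivalence classes rather than open pieces of them; establishing this requires discarding the image of the locus where the fibre dimension of $\mathcal{C}_N\to X$ jumps and checking that the remaining classes are unaffected. You rightly flag this as the main obstacle, but as written the proposal is an outline of the known proof with its central step deferred, not a complete argument.
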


\begin{rmk}
Follow the notation of definition \ref{family} and theorem \ref{ratquotient}. Suppose $X$ is $\mathbb{Q}$-factorial. Let $\ell$ be a curve in $X$ such that $\ell \cap X_0 \neq \emptyset$. Suppose $\pi_0(\ell)$ is not a point. Then $\ell$ is not numerically proportional to the general curve of $H$. Indeed, let $D$ be an effective divisor in $Y_0$ which intersects $\pi_0(\ell)$ and does not contain $\pi_0(\ell)$. Let $D'$ be the closure of $\pi_0^{-1}(D)$ in $X$. Since $X$ is $\mathbb{Q}$-factorial, $D'$ is $\mathbb{Q}$-Cartier. For a general element $[\ell'] \in H$, $\pi_0$ contracts $\ell'$, and thus $D'\cdot \ell' = 0$, while $D'\cdot \ell > 0$, showing that $\ell$ and $\ell'$ are not numerically proportional. \label{numeric}
\end{rmk}

Finally, the following lemma related to families of rational curves will be used in the proof of the classification theorem. For the notion of free curve, see \cite[Definition II.3.1]{kollar1999rational}.

\begin{lem}[{{\cite[Lemma 2.6]{fanofoliations2}}}]
	Suppose that $X$ is a Fano manifold with $\rho(X) = 1$. Suppose that there is an $m$-dimensional family $V$ of rational curves of degree $d = -K_X\cdot V$ on $X$ such that:
	\begin{itemize}
		\item all curves from $V$ pass through some fixed point $x \in X$; and
		\item some curve from $V$ is free.
	\end{itemize}
	Then $\iota_X \geq \frac{m+2}{d}$. \label{boundforindex}
\end{lem}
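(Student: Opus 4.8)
The plan is to prove Lemma \ref{boundforindex} by a standard deformation-of-curves argument in the spirit of Mori theory, estimating the dimension of the space of deformations of a free rational curve through a fixed point. First I would pick a curve $C$ from $V$ that is free and passes through the fixed point $x$; after possibly replacing $C$ by a general member of $V$, I may assume $C$ is free and, since all curves in $V$ pass through $x$, that $[x] \in C$. Let $g\colon \mathbb{P}^1 \to X$ be its normalization, with $g(0) = x$. Recall that the space $\mathrm{Hom}(\mathbb{P}^1, X; 0 \mapsto x)$ of morphisms sending $0$ to $x$ has, at $[g]$, dimension at least $-K_X \cdot C + (\dim X)(1 - 1) - \dim X = -K_X\cdot C - \dim X = d - n$ by the usual Euler-characteristic bound (using $h^1(\mathbb{P}^1, g^*T_X(-1)) = 0$, which holds because $C$ is free, so $g^*T_X$ is globally generated and $g^*T_X(-1)$ has no higher cohomology once one checks the splitting type). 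Subtracting the $3$-dimensional automorphism group of $\mathbb{P}^1$ fixing nothing, but here fixing the point $0$ cuts automorphisms down to the $2$-dimensional stabilizer of $0$ in $\mathrm{PGL}_2$; so the family of curves through $x$ obtained this way has dimension at least $(d - n) + n - 2 = d - 2$... wait, I must be more careful, and the correct bookkeeping is what the argument hinges on.

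More precisely, the key computation is: $\dim_{[g]} \mathrm{Hom}(\mathbb{P}^1, X) \geq \chi(\mathbb{P}^1, g^*T_X) = -K_X\cdot C + \dim X = d + n$, and imposing that $0 \mapsto x$ cuts down by at most $\dim X = n$, giving $\dim_{[g]}\mathrm{Hom}(\mathbb{P}^1, X; 0\mapsto x) \geq d$. Quotienting by the $2$-dimensional subgroup of $\mathrm{Aut}(\mathbb{P}^1)$ fixing $0$ gives a family of rational curves through $x$ of dimension at least $d - 2$. On the other hand, our given family $V$ of curves through $x$ has dimension $m$; since $V$ lives inside this space of deformations (the free curve being a smooth point, all nearby deformations through $x$ are accounted for), one might hope to just compare — but the inequality we want goes the other way. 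The actual mechanism, following Kollár's bend-and-break philosophy, is: since $C$ is free and passes through $x$, and we also have an $m$-dimensional family through $x$, we can consider the sublocus of $\mathrm{Hom}(\mathbb{P}^1, X; 0 \mapsto x)$ containing $V$; every irreducible component has dimension $\geq d$ (this is the lower bound from Riemann–Roch, valid at the free curve, hence on the component containing it), and $V$ corresponds to a subvariety of dimension $m + 2$ inside it (adding back the $2$-dimensional choice of parametrization). So $m + 2 \leq$ the dimension of that $\mathrm{Hom}$-component, which by freeness equals exactly $\chi(\mathbb{P}^1, g^*T_X \otimes \mathcal{I}_0) = d + n - n = d$ when the curve is free — hmm, this gives $m + 2 \leq d$, not the index bound. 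I need to bring in $\iota_X$ through the line bundle decomposition: write $-K_X = \iota_X A$ with $A$ the ample generator, so $d = -K_X\cdot C = \iota_X(A\cdot C)$ and $A\cdot C \geq 1$; combining with $m + 2 \leq d$ does not immediately help either.

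The correct route, I now believe, is to use the fact that through a fixed general point the minimal free curves have anticanonical degree bounded in terms of $n+1$ (Mori), but here we turn this around: deform $C$ to break it. Since $\dim_{[g]}\mathrm{Hom}(\mathbb{P}^1, X; 0 \mapsto x) \geq d$ and we want to produce, after bend-and-break, a curve of low degree through $x$, observe that if $m + 2 > \text{(something)}$ the family is too big and breaks. Concretely: in the $m$-dimensional family $V$ through $x$, fix also a general point $y \in X$; the sublocus of curves through both $x$ and $y$ has dimension $\geq m + 2 - n$ (imposing passage through $y$ costs $\leq n$, and we add $2$ for parametrizations to get into $\mathrm{Hom}$). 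If this is positive, i.e. $m + 2 - n \geq 1$, a positive-dimensional family of curves through two fixed points must degenerate, yielding a reducible or multiple cycle — one of whose components $C'$ still passes through $x$ and has $-K_X\cdot C' < d$, contradicting minimality unless... This is getting complicated, and I suspect the cleanest statement is: $\iota_X \cdot (A \cdot C) = d$ and, since $A \cdot C \geq 1$, breaking the $m$-dimensional family forces $A \cdot C \leq \frac{d}{?}$; tracking the numerics of which degree the broken pieces have, and using that the minimal such degree is $\geq \iota_X$ (as $-K_X\cdot C'' = \iota_X(A\cdot C'') \geq \iota_X$ for any curve $C''$), gives the stated $\iota_X \geq \frac{m+2}{d}$, i.e. $d \cdot \iota_X \geq m + 2$. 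I expect the main obstacle to be exactly this last bend-and-break bookkeeping: ensuring the degeneration produces a cycle still meeting $x$, controlling how the degree $m+2$ worth of moduli distributes among the irreducible components, and concluding that the $\iota_X$-divisibility of $-K_X$ upgrades the naive degree bound $m + 2 \leq d$ into the sharper index bound. I would handle this by citing or adapting Kollár's Proposition IV.2.6 and the two-point bend-and-break lemma, applied to the family $V$ together with the free curve to guarantee the relevant $\mathrm{Hom}$-scheme is smooth of the expected dimension along $V$.
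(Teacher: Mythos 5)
The paper does not actually prove this lemma --- it imports it from \cite[Lemma 2.6]{fanofoliations2} --- so the comparison is with the standard proof of that statement, which is precisely the dimension count you carry out in your second paragraph and then, unfortunately, walk away from. At the free curve $g$ with $g(0)=x$, freeness gives $h^1(\mathbb{P}^1,g^*T_X(-1))=0$, so $\mathrm{Hom}(\mathbb{P}^1,X;0\mapsto x)$ is smooth at $[g]$ of dimension \emph{exactly} $h^0(\mathbb{P}^1,g^*T_X(-1))=-K_X\cdot C$; quotienting by the two-dimensional stabilizer of $0$ in $\mathrm{Aut}(\mathbb{P}^1)$ (which acts with trivial stabilizer at a generically injective $g$), the locus of unparametrized curves through $x$ in the component of $\RatCurves^n(X)$ containing $[g]$ has dimension $-K_X\cdot C-2$ at $[g]$. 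Since $V$ is irreducible and contains $[g]$, this yields $m+2\le -K_X\cdot C$. The finish is one line that you never write: because $\rho(X)=1$, one has $-K_X\sim \iota_X A$ with $A$ the ample generator of $\Pic(X)$, so $m+2\le \iota_X\,(A\cdot C)$, i.e.\ $\iota_X\ge (m+2)/(A\cdot C)$. (This also explains your confusion: for the lemma to have content, and for it to produce $\iota_X\ge n+1$ from an $(n-1)$-dimensional family of \emph{lines} as in the proof of Theorem \ref{picard1}, the $d$ in the statement must be the degree with respect to the ample generator, not $-K_X\cdot V$; with the literal reading $d=-K_X\cdot V$, your inequality $m+2\le d$ makes the conclusion follow vacuously from $\iota_X\ge 1$. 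The resolution of the tension you felt is the divisibility of $-K_X$ by $\iota_X$, not bend-and-break.)

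The genuine gap is therefore that the proof is never completed: after correctly deriving $m+2\le d$ you declare it unhelpful and pivot to a bend-and-break scheme that you leave unfinished and that does not work as sketched --- there is no minimal family in the hypotheses whose ``minimality'' a broken component could contradict, the claim that imposing passage through a second point $y$ costs at most $n$ and still leaves a positive-dimensional family is not established for the special (non-general) point $x$, and the final ``tracking the numerics'' step is not an argument. Everything from ``The correct route, I now believe'' onward should be deleted; what remains, supplemented by the observation $-K_X\cdot C=\iota_X(A\cdot C)$ with $A\cdot C\ge 1$, is a complete and correct proof, and it is the one given in \cite{fanofoliations2}.
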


\begin{rmk}
By abuse of notation, we call a curve $\ell \subset X$ a line if $\ell \cdot L = 1$ for some ample line bundle $L$ on $X$.
\end{rmk}

\section{Statement and proof of results}

In this section we prove our results, as stated in the introduction. By theorem \ref{algebraicdelpezzo}, if $X$ admits a del Pezzo foliation, then $X$ is uniruled. Thus, we can consider minimal dominating families of rational curves on $X$, and consequently, take the rationally connected quotients associated to these families. We consider two cases:

\begin{itemize}
	\item For some minimal dominating family $H$ with quotient $\pi_0\colon X_0 \rightarrow Y_0$, the foliation $\mathcal{F}$ satisfies ${T_{\mathcal{F}}}_{|X_0} \not\subset T_{X_0/Y_0}$;
	\item For every minimal dominating family $H$ with quotient $\pi_0\colon X_0 \rightarrow Y_0$, the foliation $\mathcal{F}$ satisfies ${T_{\mathcal{F}}}_{|X_0} \subset T_{X_0/Y_0}$.
\end{itemize}

Notice that the first condition means that for some $H$, the general leaf of $\mathcal{F}$ is not contained in the general fiber of $\pi_0$, while the second one means that, for every $H$, the general leaf of $\mathcal{F}$ is contained in the general fiber of $\pi_0$. When $r \geq 3$, we use the classification of leaves of del Pezzo foliations, stated below, to conclude that either $X$ is a $\mathbb{P}^m$-bundle over $\mathbb{P}^{n-m}$ in the first case, or that $\rho(X) = 1$ in the second case.

\begin{prop}[{{\cite[Theorem 2.15]{fanofoliations2}} and \cite[Corollary 2.13]{characterization}}]
 Let $\mathcal{F}$ be an algebraically integrable del Pezzo foliation of rank $r \geq 2$ on a smooth projective variety $X$, with general log leaf $(F,\Delta)$ having log canonical singularities. Let $L$ be an ample divisor on $X$ such that $-K_{\mathcal{F}} \sim (r-1)L$. Then $(F,\Delta,L_{|F})$ satisfies one of the following conditions.
 
 \begin{enumerate}
  \item $(F,\mathcal{O}_F(\Delta), \mathcal{O}_F(L_{|F})) \cong (\mathbb{P}^r,\mathcal{O}_{\mathbb{P}^r}(2),\mathcal{O}_{\mathbb{P}^r}(1))$.

\item $(F,\mathcal{O}_F(\Delta), \mathcal{O}_F(L_{|F})) \cong (Q^r, \mathcal{O}_{Q^r}(1), \mathcal{O}_{Q^r}(1))$, where $Q^r$ is a smooth quadric hypersurface in $\mathbb{P}^{r+1}$.
 
 \item $(F,\Delta)$ is a cone over $(Q^m,H)$, where $Q^m$ is a smooth quadric hypersurface in $\mathbb{P}^{m+1}$ for some $2\leq m < r$, $H \in |\mathcal{O}_{Q^m}(1)|$, and $L_{|F}$ is a hyperplane section under this embedding.
 
  \item $(F,\mathcal{O}_F(\Delta), \mathcal{O}_F(L_{|F})) \cong (\mathbb{P}^2,\mathcal{O}_{\mathbb{P}^2}(1),\mathcal{O}_{\mathbb{P}^2}(2))$.
  
  \item $(F, \mathcal{O}_F(L_{|F})) \cong (\mathbb{P}_{\mathbb{P}^1}(\mathcal{E}),\mathcal{O}_{\mathbb{P}(\mathcal{E})}(1))$, and one of the following holds:
  
  \begin{enumerate}
  \item $\mathcal{E} = \mathcal{O}_{\mathbb{P}^1}(1) \oplus \mathcal{O}_{\mathbb{P}^1}(d)$ for some $d \geq 2$, and $\Delta \sim_{\mathbb{Z}} \sigma + f$, where $\sigma$ is the minimal section and $f$ a fiber of $\mathbb{P}(\mathcal{E}) \rightarrow \mathbb{P}^1$.
  
  \item $\mathcal{E} = \mathcal{O}_{\mathbb{P}^1}(2) \oplus \mathcal{O}_{\mathbb{P}^1}(d)$ for some $d \geq 2$, and $\Delta$ is a minimal section.
  
  \item $\mathcal{E} = \mathcal{O}_{\mathbb{P}^1}(1) \oplus \mathcal{O}_{\mathbb{P}^1}(1) \oplus \mathcal{O}_{\mathbb{P}^1}(d)$ for some $d \geq 1$, and $\Delta = \mathbb{P}_{\mathbb{P}^1}(\mathcal{O}_{\mathbb{P}^1}(1) \oplus \mathcal{O}_{\mathbb{P}^1}(1))$.
  \end{enumerate}
  
  \item $(F,\Delta)$ is a cone over $(C_d,B)$, where $C_d$ is the rational normal curve of degree $d$ in $\mathbb{P}^d$ for some $d \geq 2$, $B \in |\mathcal{O}_{\mathbb{P}^1}(2)|$, and $L_{|F}$ is a hyperplane under this embedding.
  
  \item $(F,\Delta)$ is a cone over the pair (5a) above, and $L_{|F}$ is a hyperplane section of the cone.
 \end{enumerate}\label{leaves}
\end{prop}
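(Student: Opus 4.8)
Writing $A := L_{|F}$, which is ample since $L$ is, Definition \ref{genlogleaf} gives $K_F + \Delta \sim i^*K_{\mathcal{F}} \sim -(r-1)A$, so the statement is really a classification of polarized log pairs: one must list all triples $(F,\Delta,A)$ with $F$ normal of dimension $r$, $A$ ample, $\Delta$ effective, $(F,\Delta)$ log canonical, and $K_F+\Delta+(r-1)A\sim 0$. These are the pair-theoretic analogues of Fujita's del Pezzo manifolds, and the plan is to adapt Fujita's ladder method.

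First I would record that $-K_F\sim(r-1)A+\Delta$ is ample, so $F$ is a log canonical Fano variety, and that one may assume $\Delta\neq 0$ (the case $\Delta=0$, which would make $(F,A)$ an honest Fujita del Pezzo manifold, is excluded because the ambient foliation is Fano). Running Fujita's ladder technique, the sectional genus $g(F,A)=1+\tfrac12(K_F+(r-1)A)\cdot A^{r-1}=1-\tfrac12\,\Delta\cdot A^{r-1}$ is $\ge 0$, which forces $\Delta\cdot A^{r-1}\in\{1,2\}$ and $g(F,A)\in\{0,1\}$; so $(F,A)$ is a very constrained polarized variety. I would then split according to the structure of $|A|$. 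In the branch where $A$ is very ample, $F$ is a variety of small degree and sectional genus: the projective spaces and quadrics of (1) and (2), the $\mathbb P^1$-scrolls $\mathbb P_{\mathbb P^1}(\mathcal E)$ of (5), and the Veronese surface $(\mathbb P^2,\mathcal O(2))$ of (4). In the branch where $A$ fails to be very ample, a degree/$\Delta$-genus computation pins the base locus and forces $F$ to be a cone, with vertex there, over one of the previous lower-dimensional pairs; this produces the quadric cones of (3), the cones over rational normal curves of (6), and the cones over the scroll (5a) of (7).

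With the finitely many polarized varieties $(F,A)$ isolated, the last step is to determine $\Delta$ on each by imposing $\Delta\in|-K_F-(r-1)A|$, effectivity, and log canonicity of $(F,\Delta)$ --- a finite computation in the divisor class group of each candidate: $|\mathcal O_{\mathbb P^r}(2)|$ on $(\mathbb P^r,\mathcal O(1))$ gives (1); $|\mathcal O_{Q^r}(1)|$ gives (2); $|\mathcal O_{\mathbb P^2}(1)|$ on the Veronese gives (4); solving for the admissible section class on each scroll gives the three subcases of (5); and the corresponding computation on the cones, with log canonicity read off from the blow-up of the vertex, gives (3), (6) and (7). This is precisely what \cite[Theorem 2.15]{fanofoliations2} carries out under the log canonicity hypothesis, while \cite[Corollary 2.13]{characterization} supplies the remaining configuration; together they yield exactly the seven cases.

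The step I expect to be the real obstacle is the cone cases (3), (6) and (7). There $F$ is genuinely singular along the vertex and $\Delta$ meets it, so deciding log canonicity requires blowing up the vertex and controlling both the discrepancy of the exceptional divisor and whether it is invariant under the pulled-back foliation (the quantity $\epsilon(E)$ of Definition \ref{sing}); ruling out cones over bases of the wrong type, and boundaries with too large a coefficient along the vertex, is the delicate part of the argument.
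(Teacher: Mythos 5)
The paper gives no proof of Proposition \ref{leaves}: it is imported verbatim from the two cited references, so there is nothing internal to compare your argument against, and your decision to end by deferring to those references matches what the paper itself does. Your sketch is also consistent with how the cited proof actually runs --- reduce to classifying polarized log pairs $(F,\Delta,A)$ with $K_F+\Delta+(r-1)A\sim 0$, show $\Delta\neq 0$ and that $(F,A)$ falls into Fujita's $\Delta$-genus-zero list (projective spaces, quadrics, scrolls over $\mathbb{P}^1$, the Veronese surface, and cones over these), then solve for the admissible boundary on each candidate. One correction to your closing paragraph: the log canonicity being tested in the cone cases (3), (6) and (7) is that of the \emph{pair} $(F,\Delta)$, which is a hypothesis of the proposition (inherited from the McQuillan hypothesis on $\mathcal{F}$ via \cite[Proposition 3.11]{fanofoliations}); the quantity $\epsilon(E)$ of Definition \ref{sing} and the invariance of the exceptional divisor under a pulled-back foliation play no role here, since the blow-up of the vertex is an ordinary discrepancy computation for a pair, not for a foliation.
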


In the proof of theorem \ref{algebraicdelpezzo}, the following lemma is used. It will also be used in the proof of our first theorem. For the reader's convenience, we sketch its proof.

\begin{lem}[{{\cite[page 100, proof of theorem 1.1]{fanofoliations}}}]
Let $X$ be a projective manifold of dimension $n \geq 3$, and suppose $X \not\cong \mathbb{P}^n$. Let $\mathcal{F}$ be a foliation on $X$ of rank $r \geq 2$, and suppose that there is an ample line bundle $L$ on $X$ such that $-K_\mathcal{F} \sim (r-1)L$. Let $H$ be a minimal dominating family of rational curves on $X$, with rationally connected quotient $\pi_0\colon X_0\rightarrow Y_0$, and take $[g] \in H$ a general member. Then one of the following holds:
\begin{enumerate}
\item $g^*T_{\mathcal{F}} \cong \mathcal{O}_{\mathbb{P}^1}(1)^{\oplus(r-1)}\oplus \mathcal{O}_{\mathbb{P}^1}$, and $H$ is unsplit, or
\item $g^*T_{\mathcal{F}} \cong \mathcal{O}_{\mathbb{P}^1}(2) \oplus \mathcal{O}_{\mathbb{P}^1}$, $r=2$, or
\item $g^*T_{\mathcal{F}} \cong \mathcal{O}_{\mathbb{P}^1}(2) \oplus \mathcal{O}_{\mathbb{P}^1}(1)^{\oplus(r-3)}\oplus \mathcal{O}_{\mathbb{P}^1}^{\oplus 2}$, $r \geq 3$, and $H$ is unsplit.
\end{enumerate} \label{decomposition}
\end{lem}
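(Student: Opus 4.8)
The plan is to analyze the restriction $g^*T_{\mathcal{F}}$ on the normalization $g\colon \mathbb{P}^1 \to C$ of a general curve of $H$ by combining three inputs: the splitting type of $g^*T_X$, the numerical constraint coming from $-K_{\mathcal{F}} \sim (r-1)L$, and the positivity/subsheaf structure forced by $T_{\mathcal{F}} \hookrightarrow T_X$. Since $C$ passes through a general point of $X$ and $H$ is minimal dominating, $g$ is a free curve, so $g^*T_X$ is globally generated; write $g^*T_X \cong \bigoplus_{i=1}^n \mathcal{O}_{\mathbb{P}^1}(a_i)$ with all $a_i \geq 0$. First I would recall (from the standard bend-and-break / minimality analysis, as in \cite{kollar1999rational}) that for a minimal dominating family the anticanonical degree satisfies $-K_X \cdot C \leq n+1$, with the positive part of the splitting type of $g^*T_X$ concentrated: after reordering, $a_1 \geq a_2 \geq \dots$, one has $a_1 \leq 2$ and at most one $a_i$ equal to $2$ unless $X \cong \mathbb{P}^n$; more precisely $g^*T_X$ is a sum of $\mathcal{O}(2)^{\oplus ?}\oplus\mathcal{O}(1)^{\oplus ?}\oplus \mathcal{O}^{\oplus ?}$ and, since $X\not\cong\mathbb{P}^n$, the $\mathcal{O}(2)$-summand appears at most once (a curve with $g^*T_X$ having two or more factors of degree $\ge 2$, or one of degree $\ge 3$, forces $X=\mathbb{P}^n$ by Cho–Miyaoka–Shepherd-Barron / Kebekus).

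Next I would restrict along the saturated subsheaf $T_{\mathcal{F}} \subset T_X$. Since $g$ is general, $g^*T_{\mathcal{F}}$ is a subsheaf of $g^*T_X$ of rank $r$; being a subsheaf of a globally generated bundle on $\mathbb{P}^1$ its negative part is controlled, and in fact for a general free curve the restriction $g^*T_{\mathcal{F}}$ is itself a subbundle with $g^*T_{\mathcal{F}}\cong \bigoplus_{i=1}^r \mathcal{O}_{\mathbb{P}^1}(b_i)$ where the $b_i$ are among the $a_i$ up to the usual semicontinuity — the key point being that $\deg g^*T_{\mathcal{F}} = -K_{\mathcal{F}}\cdot C = (r-1)(L\cdot C)$. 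I would also use that $T_{\mathcal{F}}$ contains the tangent directions of the leaves and that, because $\mathcal{F}$ is non-trivial of rank $r < n$, at least one $b_i = 0$ (the curve $C$ is not everywhere tangent to $\mathcal{F}$ for general $C$ — otherwise, moving $C$, every leaf would be $X$). Combining $\deg = (r-1)(L\cdot C)$ with $b_i \leq 2$, $0 \leq b_i$, at most one $b_i = 2$, and at least one $b_i = 0$: writing the multiset of $b_i$'s as $2^{\epsilon}1^{s}0^{t}$ with $\epsilon\in\{0,1\}$, $\epsilon + s + t = r$, $t\geq 1$, one gets $(r-1)(L\cdot C) = 2\epsilon + s$. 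Since $L\cdot C \geq 1$, this forces $L\cdot C \in \{1,2\}$ and pins down the possibilities: if $L\cdot C = 1$ then $2\epsilon + s = r-1$ with $\epsilon + s + t = r$, $t\geq 1$, giving either $\epsilon=0, s=r-1, t=1$ (case (1)) or $\epsilon=1, s=r-3, t=2$ (case (3), needing $r\geq 3$); if $L\cdot C = 2$ then $2\epsilon+s = 2r-2$ with $s \leq r-1$ forces $s = r-1$, hence $r$ odd is not even needed — one gets $\epsilon$ forced and $r = 2$, $\epsilon = 1$, $s=0$, $t=1$ (case (2)).

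Finally I would deal with the unsplitness claim attached to cases (1) and (3). The point is that $L\cdot C = 1$, i.e. $C$ is a line with respect to $L$. If $H$ were not unsplit, its closure in $\Chow(X)$ would contain a reducible or non-reduced degeneration, whose components are again rational curves of strictly smaller $L$-degree — impossible, since $L$ is ample so each component has $L$-degree $\geq 1$ and there would have to be at least two of them summing to $1$. Hence $H$ is proper, i.e. unsplit. In case (2), $L\cdot C = 2$ and this argument does not apply, consistent with the statement making no unsplitness claim there. The main obstacle I anticipate is the second paragraph: rigorously controlling the splitting type of $g^*T_{\mathcal{F}}$ — in particular justifying that for a \emph{general} member of a \emph{minimal dominating} family the restricted tangent sheaf of the foliation is a subbundle with nonnegative splitting type and that no summand has degree $\geq 3$ and at most one has degree $2$ (here one must invoke the Cho–Miyaoka–Shepherd-Barron characterization of $\mathbb{P}^n$ applied to $T_X$, together with the fact that a positive subsheaf of $g^*T_X$ inherits the bound), and excluding the degenerate situation where $C$ is tangent to $\mathcal{F}$. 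Everything after that is the elementary arithmetic of the splitting type carried out above.
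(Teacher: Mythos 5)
Your overall strategy is the same as the paper's: restrict to a general member $g$ of the minimal dominating family, use $g^*T_X\cong\mathcal{O}_{\mathbb{P}^1}(2)\oplus\mathcal{O}_{\mathbb{P}^1}(1)^{\oplus d}\oplus\mathcal{O}_{\mathbb{P}^1}^{\oplus(n-d-1)}$, the degree count $\deg g^*T_{\mathcal{F}}=(r-1)(L\cdot g_*\mathbb{P}^1)$, non-ampleness of $g^*T_{\mathcal{F}}$ coming from $X\not\cong\mathbb{P}^n$, and the observation that $L\cdot C=1$ forces unsplitness. But there is a genuine gap at the decisive step. You assert that $g^*T_{\mathcal{F}}$ has nonnegative splitting type because it is a subsheaf of the globally generated bundle $g^*T_X$ ``with the $b_i$ among the $a_i$ up to semicontinuity.'' Neither claim is true: a subsheaf of a nef bundle on $\mathbb{P}^1$ can have negative summands (e.g.\ $\mathcal{O}(-1)\hookrightarrow\mathcal{O}^{\oplus 2}$), and the degrees of a subbundle's summands are not constrained to lie among those of the ambient bundle. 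Concretely, $g^*T_{\mathcal{F}}\cong\mathcal{O}_{\mathbb{P}^1}(2)\oplus\mathcal{O}_{\mathbb{P}^1}(1)^{\oplus(r-2)}\oplus\mathcal{O}_{\mathbb{P}^1}(-1)$ passes every test you impose: it has degree $r-1$ (consistent with $L\cdot C=1$), embeds into $g^*T_X$, and is not ample. The paper must exclude exactly this case by a separate, substantive argument: if it occurred, the positive part of $g^*T_X$ would lie in $T_{\mathcal{F}}$ along the curve, giving $T_{X_0/Y_0}\subset {T_{\mathcal{F}}}_{|X_0}$, so $\pi_0$ would be a projective-space bundle and $\mathcal{F}$ the pullback of a foliation by curves on $Y_0$, which forces the last summand to be $\mathcal{O}_{\mathbb{P}^1}$ rather than $\mathcal{O}_{\mathbb{P}^1}(-1)$. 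This is the only place where the foliation structure (via the rationally connected quotient) is genuinely used, and your proposal skips it entirely.

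A second, smaller error: your justification that some $b_i=0$ via ``the general $C$ is not tangent to $\mathcal{F}$, otherwise every leaf would be $X$'' is false. The general member of a minimal dominating family can perfectly well be tangent to $\mathcal{F}$ (a leaf of dimension $r\geq 2$ contains plenty of rational curves), and indeed this is exactly what happens in cases (2) and (3) of the conclusion, where the $\mathcal{O}_{\mathbb{P}^1}(2)$ summand of $g^*T_{\mathcal{F}}$ is $T_{\mathbb{P}^1}$; the paper's later proof of Theorem \ref{thm1.0} relies on this tangency. The correct input is that $g^*T_{\mathcal{F}}$ is not ample, which is \cite[Lemma 6.10]{fanofoliations} and uses $X\not\cong\mathbb{P}^n$ in a nontrivial way. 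With non-ampleness and the (to-be-proved) exclusion of the $\mathcal{O}_{\mathbb{P}^1}(-1)$ case in hand, your concluding arithmetic and the unsplitness argument are correct.
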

\begin{proof}[Idea of proof]
Since $T_{\mathcal{F}}$ is a reflexive sheaf, the locus $Z$ where $T_\mathcal{F}$ is not locally free has codimension at least $2$. Since $H$ is dominating, a general member $[g] \in H$ avoids $Z$ by \cite[Proposition II.3.7]{kollar1999rational}. Thus $g^*T_{\mathcal{F}} \cong \mathcal{O}_{\mathbb{P}^1}(a_1)\oplus \dots \oplus \mathcal{O}_{\mathbb{P}^1}(a_r)$, with $a_1+\dots+a_r = r-1$. Since $X \not\cong \mathbb{P}^n$, by \cite[Lemma 6.10]{fanofoliations}, $g^*T_{\mathcal{F}}$ is not ample. Moreover, by \cite[IV.2.9]{kollar1999rational}, $g^*T_X \cong \mathcal{O}_{\mathbb{P}^1}(2)\oplus \mathcal{O}_{\mathbb{P}^1}(1)^{\oplus d} \oplus \mathcal{O}_{\mathbb{P}^1}^{\oplus (n-d-1)}$, for some $d \geq 0$. This shows that $g^*T_{\mathcal{F}}$ is one of the cases (1), (2), or (3), or $g^*T_{\mathcal{F}} \cong \mathcal{O}_{\mathbb{P}^1}(2) \oplus \mathcal{O}_{\mathbb{P}^1}(1)^{\oplus (r-2)} \oplus \mathcal{O}_{\mathbb{P}^1}(-1)$.

If this last case happens, then $T_{X_0/Y_0} \subset {T_{\mathcal{F}}}_{|X_0}$ and thus $g^*T_{X_0/Y_0} \cong \mathcal{O}_{\mathbb{P}^1}(2) \oplus \mathcal{O}_{\mathbb{P}^1}(1)^{\oplus (r-2)}$. This implies that $\pi_0$ is a projective space bundle and $\mathcal{F}$ is the pullback of a foliation by curves on $Y_0$, which would imply that $g^*T_{\mathcal{F}} \cong \mathcal{O}_{\mathbb{P}^1}(2) \oplus \mathcal{O}_{\mathbb{P}^1}(1)^{\oplus(r-2)}\oplus \mathcal{O}_{\mathbb{P}^1}$, a contradiction.
\end{proof}
%
%

We can now give our first result, whose proof is based on \cite[Theorem 8.1]{fanofoliations}.

\begin{thm}
	Let $X \not\cong \mathbb{P}^n$ be a projective manifold of dimension $n$ and let $\mathcal{F}$ be a del Pezzo foliation with log canonical singularities in the sense of McQuillan. If for some minimal dominating family of rational curves $H$, with associated rationally connected quotient $\pi_0\colon X_0 \rightarrow Y_0$, we have that ${T_{\mathcal{F}}}_{|X_0} \not\subset T_{X_0/Y_0}$, then $X$ is isomorphic to a $\mathbb{P}^m$-bundle over $\mathbb{P}^{n-m}$ and $r \leq 3$. \label{thm1.0}
\end{thm}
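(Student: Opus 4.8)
\emph{Proof plan.} The plan is to adapt the argument of \cite[Theorem 8.1]{fanofoliations}, using the (now unconditional) classification of log leaves in Proposition \ref{leaves} in place of the technical hypotheses imposed there. First, since $X\not\cong\mathbb{P}^n$, Theorem \ref{algebraicdelpezzo} gives that $\mathcal{F}$ is algebraically integrable with rationally connected general leaves; fix an ample line bundle $L$ with $-K_{\mathcal{F}}\sim(r-1)L$, and let $(F,\Delta)$ be the general log leaf, which is log canonical by the remark following Definition \ref{genlogleaf}. Then $(F,\Delta,L_{|F})$ is one of the seven types listed in Proposition \ref{leaves}, and by Proposition \ref{logcenter} there is a fixed closed irreducible $T\subset X$ which is the image in $X$ of a log canonical center $S$ of $(F,\Delta)$. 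Applying Lemma \ref{decomposition} to $H$, for a general $[g]\in H$ one has $g^*T_{\mathcal{F}}\cong\mathcal{O}_{\mathbb{P}^1}(1)^{\oplus(r-1)}\oplus\mathcal{O}_{\mathbb{P}^1}$, or $g^*T_{\mathcal{F}}\cong\mathcal{O}_{\mathbb{P}^1}(2)\oplus\mathcal{O}_{\mathbb{P}^1}$ with $r=2$, or $g^*T_{\mathcal{F}}\cong\mathcal{O}_{\mathbb{P}^1}(2)\oplus\mathcal{O}_{\mathbb{P}^1}(1)^{\oplus(r-3)}\oplus\mathcal{O}_{\mathbb{P}^1}^{\oplus2}$ with $r\ge3$, with $H$ unsplit in the first and third cases; taking degrees in $-K_{\mathcal{F}}\cdot\ell=\deg g^*T_{\mathcal{F}}$ shows that a general curve $\ell$ of $H$ has $L\cdot\ell=1$ in the first and third cases and $L\cdot\ell=2$ in the second.

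Next I would unwind the hypothesis ${T_{\mathcal{F}}}_{|X_0}\not\subset T_{X_0/Y_0}$. Writing $g^*T_X\cong\mathcal{O}_{\mathbb{P}^1}(2)\oplus\mathcal{O}_{\mathbb{P}^1}(1)^{\oplus d}\oplus\mathcal{O}_{\mathbb{P}^1}^{\oplus(n-1-d)}$ (by \cite[IV.2.9]{kollar1999rational}) and noting that $g^*T_{X_0/Y_0}$ is the kernel of $g^*T_X\twoheadrightarrow g^*\pi_0^*T_{Y_0}\cong\mathcal{O}_{\mathbb{P}^1}^{\oplus\dim Y_0}$, a map that annihilates every positive-degree summand of $g^*T_X$, one sees that the positive part of $g^*T_X$ lies inside $g^*T_{X_0/Y_0}$; hence the hypothesis says precisely that the image in $g^*T_X$ of the trivial part of $g^*T_{\mathcal{F}}$ is not contained in $g^*T_{X_0/Y_0}$, i.e. the general leaf is not contained in a fibre of $\pi_0$, so $\pi_0(F)$ is positive-dimensional. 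Moreover, since $g^*T_{\mathcal{F}}\hookrightarrow g^*T_X$ and $\operatorname{Hom}(\mathcal{O}_{\mathbb{P}^1}(2),\mathcal{O}_{\mathbb{P}^1}(a))=0$ for $a\le1$, in the second and third cases the degree-$2$ summand of $g^*T_{\mathcal{F}}$ maps isomorphically onto that of $g^*T_X$; as the surjection $g^*T_X\twoheadrightarrow\mathcal{O}_{\mathbb{P}^1}(2)$ splits uniquely, this forces $dg(T_{\mathbb{P}^1})\subset\im(g^*T_{\mathcal{F}})$, so the general curve of $H$ is tangent to $\mathcal{F}$ and hence lies in the general leaf, while in the first case the general curve of $H$ is not tangent to $\mathcal{F}$.

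I would then build the bundle structure, following the scheme of \cite[Theorem 8.1]{fanofoliations}. In the first case the relative subfoliation $\mathcal{G}:=T_{\mathcal{F}}\cap T_{X_0/Y_0}$ has rank $r-1$ with $g^*\mathcal{G}\cong\mathcal{O}_{\mathbb{P}^1}(1)^{\oplus(r-1)}$ ample, so $\mathcal{G}$ is algebraically integrable; by the Kobayashi-Ochiai characterization applied to its general leaf, a comparison against Proposition \ref{leaves}, and the argument in the last paragraph of the proof of Lemma \ref{decomposition}, its leaves are projective spaces and $X$ is a $\mathbb{P}^{r-1}$-bundle over a variety $Z$ of dimension $n-r+1$; since $\mathcal{F}\supset\mathcal{G}$, the general $\mathcal{F}$-leaf is the preimage of a rational curve in $Z$, hence a $\mathbb{P}^{r-1}$-bundle over $\mathbb{P}^1$, hence of type (5) in Proposition \ref{leaves}, which forces $r\le 3$. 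In the second and third cases the minimal rational curves of $H$ sweep out the general leaf and are contracted by $\pi_0$; as $\mathbb{P}^r$ and $Q^r$ with $r\ge 3$ carry no dominant rational map onto a positive-dimensional base, Proposition \ref{leaves} allows only a cone over $(Q^2,H)$ (with $Q^2\cong\mathbb{P}^1\times\mathbb{P}^1$), a cone over the pair (5a), or a scroll of type (5a)/(5b)/(5c) for $(F,\Delta)$, and the position of the common log canonical center $S$ together with the fibration $\pi_0$ organizes $X$ as a $\mathbb{P}^m$-bundle over a base $Y_0$.

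Finally I would identify the base and complete the bound on $r$. Either $\rho(X)=1$, in which case $-K_X\cdot\ell=r-1$ makes $X$ a del Pezzo manifold of Picard number one carrying a del Pezzo foliation of rank $r\ge 3$, which is impossible by Theorem \ref{delpezzo} together with Lemmas \ref{completeintersection} and \ref{grassmann}; or $\rho(X)\ge 2$ and the $\mathbb{P}^m$-bundle $X\to Y_0$ constructed above, together with the Fano foliation that $Y_0$ inherits from $\mathcal{F}$ and Theorem \ref{index}, forces $Y_0\cong\mathbb{P}^{n-m}$. Once $X$ is a $\mathbb{P}^m$-bundle over $\mathbb{P}^{n-m}$ admitting a del Pezzo foliation, the classification of such foliations in \cite[Theorem 9.6]{fanofoliations} gives $r\le 3$ (in particular ruling out a posteriori the cone-over-$Q^2$ leaves with $r>3$). \emph{The main obstacle} is the third step: upgrading the generic statements ($\pi_0$ defined only on $X_0$, a general leaf only dominating $\pi_0(F)$) to the honest assertion that $X$ is a $\mathbb{P}^m$-bundle, handling the singular (cone) leaves that the earlier arguments of \cite{fanofoliations} had to exclude, and pinning the base down to $\mathbb{P}^{n-m}$.
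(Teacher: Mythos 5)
Your setup (algebraic integrability, the trichotomy of Lemma \ref{decomposition}, and the tangency analysis showing that in cases (2) and (3) the general $H$-curve is tangent to $\mathcal{F}$ while in case (1) it is not) matches the paper, and your treatment of case (1) is essentially a re-derivation of \cite[Proposition 7.13]{fanofoliations}, which the paper simply invokes to get both the $\mathbb{P}^m$-bundle structure over $\mathbb{P}^{n-m}$ and the bound $r\le 3$ in one stroke. The problem is cases (2) and (3), which is where all the new work of this theorem lives, and there your proposal has a genuine gap. The sentence ``the position of the common log canonical center $S$ together with the fibration $\pi_0$ organizes $X$ as a $\mathbb{P}^m$-bundle over a base $Y_0$'' is not an argument: in these cases $T_{X_0/Y_0}\subset {T_{\mathcal{F}}}_{|X_0}$, so the fibers of $\pi_0$ sit \emph{inside} the leaves, $\pi_0$ is only defined on an open set, and nothing you have written produces a projective-space bundle structure on all of $X$, let alone identifies the base as $\mathbb{P}^{n-m}$. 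Moreover your list of surviving leaf types is wrong: the cones (cases (3), (6), (7) of Proposition \ref{leaves}) are $H$-rationally connected through the vertex for \emph{any} minimal dominating family, so the transversality hypothesis already excludes them; what survives is only the scrolls (5a)--(5c), not ``a cone over $(Q^2,H)$ or over (5a)''.

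The missing idea is a pivot to a second family. The paper first uses $\Supp(\Delta)=n^{-1}(\sing(\mathcal{F}))$ (from \cite[Lemma 2.12]{characterization}) together with the fact that, $\mathcal{F}$ being a pullback from $Y_0$, $\sing(\mathcal{F})$ is a union of fibers of $\pi_0$, to show $\Delta$ must be supported on fibers of one of the two rulings of the scroll; inspecting (5a)--(5c) this forces case (5b) with $d=2$, i.e.\ $F\cong\mathbb{P}^1\times\mathbb{P}^1$ with $\Delta$ a fiber of the second ruling $\psi$. It then takes $\ell'=n(\sigma)$ for $\sigma$ a minimal section of $\psi$, computes $-K_{\mathcal{F}}\cdot\ell'=1$ so the dominating family $H'$ containing $[\ell']$ is unsplit, shows $H'$ is not tangent to $\mathcal{F}$ (tangency would force $\ell'$ to meet $\sing(\mathcal{F})$, contradicting \cite[Proposition II.3.7]{kollar1999rational}), hence $H'$ falls into case (1) of Lemma \ref{decomposition}, and finally checks via Remark \ref{numeric} and \cite[Proposition IV.3.13.3]{kollar1999rational} that ${T_{\mathcal{F}}}_{|X_1}\not\subset T_{X_1/Y_1}$ for the $H'$-quotient, so that \cite[Proposition 7.13]{fanofoliations} applies to $H'$. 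Without this construction of $H'$ your argument cannot close cases (2) and (3), and your concluding dichotomy on $\rho(X)$ (in particular the claim that $\rho(X)=1$ would make $X$ a del Pezzo manifold) is not justified by anything preceding it.
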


\begin{proof}
	Our proof will consist of showing that there is a minimal dominating family $H'$ (possibly equal to $H$) of rational curves in $X$, with associated rationally connected quotient $\pi_0'\colon X_0' \rightarrow Y_0'$ satisfying ${T_{\mathcal{F}}}_{|X_0'} \not\subset T_{X_0'/Y_0'}$, and such that if $[g] \in H'$ is a general member, then $g^*T_{\mathcal{F}}$ is in case (1) of lemma \ref{decomposition}. These conditions will imply, by \cite[Proposition 7.13]{fanofoliations}, that $X$ is $\mathbb{P}^m$-bundle over $\mathbb{P}^{n-m}$, and $r \leq 3$.

	We begin by remarking that since $X \not\cong \mathbb{P}^n$, the foliation $\mathcal{F}$ is algebraically integrable by theorem \ref{algebraicdelpezzo}. In our proof, we will denote by $n\colon F \rightarrow X$ the normalization of the closure of a general leaf of $\mathcal{F}$.

	By lemma \ref{decomposition}, we know that for $[g] \in H$ general, there are three possibilities for $g^*T_{\mathcal{F}}$, which we label (1), (2) and (3) according to the same lemma.

	If case (1) happens, then by \cite[Proposition 7.13]{fanofoliations} $\pi_0$ makes $X$ a $\mathbb{P}^m$-bundle over $\mathbb{P}^{n-m}$, and $r \leq 3$.

	Suppose we are in cases (2) or (3). Then, we have $T_{\mathbb{P}^1} \subset g^*T_{\mathcal{F}}$, and it follows that the general member of $H$ is tangent to $\mathcal{F}$. Thus, after shrinking $Y_0$ if necessary, we may assume that $T_{X_0/Y_0} \subset {T_\mathcal{F}}_{|X_0}$ (see \cite[Lemma 6.9]{fanofoliations}). This implies that there is a foliation $\mathcal{G}$ on $Y_0$ such that $\mathcal{F} = \pi_0^* \mathcal{G}$, and thus $\sing(\mathcal{F})$ is supported on fibers of $\pi_0$ union $X \setminus X_0$. Now $H$ induces a minimal dominating family of rational curves on $F$, which we also denote by $H$. Notice that the hypothesis ${T_{\mathcal{F}}}_{|X_0} \not\subset T_{X_0/Y_0}$ implies that $F$ is not $H$-rationally connected. By the classification of leaves in Proposition \ref{leaves}, we must have $F$ a projective space bundle and $r \leq 3$ (cases (5a), (5b) and (5c)), since in all other possibilities, $F$ will be $H$-rationally connected for any $H$ a minimal dominating family.
	
	Denote by $\varphi$ the projective space bundle structure on $F$ given by propostion \ref{leaves}. Then $H$ on $F$ will be either the family of rational curves contained in fibers of $\varphi$, or a family of curves transverse to $\varphi$. In this second case, there is another $\mathbb{P}^1$-bundle structure on $F$, transverse to $\varphi$, which we denote by $\psi$. By \cite[Lemma 2.12]{characterization}, $\Supp(\Delta) = n^{-1}(\sing(\mathcal{F}))$. Since $\sing(\mathcal{F})$ is supported on fibers of $\pi_0$ union $X \setminus X_0$, we conclude that $\Delta$ is supported on a finite union of fibers of $\varphi$ or $\psi$. Analysing the cases (5a), (5b) and (5c) of proposition \ref{leaves}, this will be only possible if we are in case (5b), $d = 2$, and $F = \mathbb{P}^1 \times \mathbb{P}^1$. In this case $\psi$ is the second projection $F \rightarrow \mathbb{P}^1$ and $\Delta$ is a fiber of $\psi$. Denote by $f$ and $\sigma$ a fiber and a minimal section of $\psi$, respectively.

	Now, since $n^*K_\mathcal{F} \sim K_F + \Delta$, we have $-K_{\mathcal{F}}\cdot n_*(\sigma)= -K_F\cdot \sigma - f\cdot \sigma = 1$. Denote by $\ell' = n(\sigma)$ and let $H'$ be a dominating family of rational curves on $X$ containing $[\ell']$. Since $-K_{\mathcal{F}}\cdot \ell' = 1$, $H'$ is unsplit. We claim that the general member of $H'$ cannot be tangent to $\mathcal{F}$. Indeed, suppose it was. Then, the pullback of a general element of $H'$ to $F$ would be linearly equivalent to a section of $\psi$. Since, by \cite[Lemma 2.12]{characterization}, $n^{-1}(\sing(\mathcal{F})) \sim f$, this would imply that the general element of $H'$ intersects $\sing(\mathcal{F})$. However, $\codim(\sing(\mathcal{F})) \geq 2$, and by \cite[Proposition II.3.7]{kollar1999rational}, the general member of $H'$ avoids any closed set of codimension at least $2$. This is a contradiction. Thus, by lemma \ref{decomposition}, $(g')^*(T_{\mathcal{F}}) \cong \mathcal{O}_{\mathbb{P}^1}(1) \oplus \mathcal{O}_{\mathbb{P}^1}$, for $[g'] \in H'$ general. Consider $\pi'\colon X_1 \rightarrow Y_1$ the rationally connected quotient associated to $H'$. The condition ${T_\mathcal{F}}_{|X_0} \not\subset T_{X_0/Y_0}$ implies that $\pi_0$ does not contract the intersection of the general leaf of $\mathcal{F}$ with $X_0$. For this to happen, $\pi_0$ cannot contract $\ell' \cap X_0$ (we have $\ell' \cap X_0 \neq \emptyset$ because $\pi_0$ is defined on $n(f)$). Thus, by remark \ref{numeric}, $\ell'$ is not numerically proportional to the general element of $H$. Now, if $F'$ is a general fiber of $\pi'$, then by \cite[Proposition IV.3.13.3]{kollar1999rational}, $N_1(F')$ is generated by members of $H'$. Since the members of $H'$ are numerically proportional, it follows that the general member of $H$ cannot be contained in the general fiber of $\pi'$. Thus we have ${T_{\mathcal{F}}}_{|X_1} \not\subset T_{X_1/Y_1}$. By \cite[Proposition 7.13]{fanofoliations}, $\pi'$ makes $X$ a $\mathbb{P}^{n-m}$-bundle over $\mathbb{P}^m$ and $r \leq 3$
\end{proof}

Now we consider the case where ${T_\mathcal{F}}_{|X_0} \subset T_{X_0/Y_0}$, whose proof is also based in \cite{fanofoliations}.

\begin{thm}
Let $X \not\cong \mathbb{P}^n$ be a projective manifold of dimension $n$ and let $\mathcal{F}$ be a del Pezzo foliation of rank $r\geq 3$ and log canonical singularities in the sense of McQuillan. If for every minimal dominating family of rational curves $H$, with associated rationally quotient $\pi_0\colon X_0 \rightarrow Y_0$, we have ${T_{\mathcal{F}}}_{|X_0} \subset T_{X_0/Y_0}$, then $\rho(X) = 1$. \label{thm1.1}
\end{thm}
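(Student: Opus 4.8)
Since $X\not\cong\mathbb P^n$, Theorem~\ref{algebraicdelpezzo} gives that $\mathcal F$ is algebraically integrable with rationally connected general leaves. Let $n\colon F\to X$ be the normalization of the closure of a general leaf; by Proposition~\ref{leaves} and $r\ge 3$ the general log leaf $(F,\Delta)$ is of type (1), (2), (3), (5c) or (7), and by Proposition~\ref{logcenter} there is a closed irreducible $T\subset X$, the image of a log canonical centre of $(F,\Delta)$, contained in the closure of every general leaf. The plan is to assume $\rho(X)\ge 2$ and derive a contradiction.

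Fix a minimal dominating family $H$ with rationally connected quotient $\pi_0\colon X_0\to Y_0$. Since $\rho(X)\ge 2$, \cite[Proposition~IV.3.13.3]{kollar1999rational} forces $\dim Y_0\ge 1$, so a general fibre $F'$ of $\pi_0$ is a proper subvariety of $X$, and is \emph{smooth} by generic smoothness; the hypothesis reads $T_{\mathcal F}|_{X_0}\subseteq T_{X_0/Y_0}$. For $[g]\in H$ general, Lemma~\ref{decomposition} allows only cases (1) and (3) for $g^*T_{\mathcal F}$, since (2) needs $r=2$. The key dichotomy is whether $g$ is tangent to $\mathcal F$: in case (3) the summand $\mathcal O_{\mathbb P^1}(2)$ of $g^*T_{\mathcal F}$, mapping to $g^*T_X$ by the essentially unique such homomorphism, must coincide with $dg(T_{\mathbb P^1})$, so $g$ is tangent to $\mathcal F$; then \cite[Lemma~6.9]{fanofoliations} makes the fibres of $\pi_0$ tangent to $\mathcal F$, whence $T_{\mathcal F}|_{X_0}=T_{X_0/Y_0}$ and the general leaf equals the general fibre $F'$.

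I would rule out this "leaf $=$ fibre" case via the classification. As $F'$ equals the leaf, which lies in the smooth locus of $F$, the bundle $g^*T_{\mathcal F}$ is the restriction of $T_F$ along a rational curve $\ell\subset F\setminus\sing(F)$ with $L_{|F}\cdot\ell=1$, moving in a family that covers $F$ and rationally chain-connects it (\cite[Proposition~IV.3.13.3]{kollar1999rational}), and the type-(3) splitting forces $-K_F\cdot\ell=r-1$. For $F=\mathbb P^r$, $Q^r$, and the quadric hypersurface cones of (3), $-K_F$ is Cartier with $-K_F\cdot\ell$ equal to $r+1$, $r$, $r$ respectively, never $r-1$; for the threefolds of (5c) no rationally chain-connecting family simultaneously has $-K_F\cdot\ell=2$ and $L_{|F}\cdot\ell=1$; and for the cones of (7) such an $\ell$ would have normal bundle $\mathcal O_{\mathbb P^1}(1)\oplus\mathcal O_{\mathbb P^1}(-1)$, contradicting freeness of $\ell$. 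So we are left with $g^*T_{\mathcal F}\cong\mathcal O_{\mathbb P^1}(1)^{\oplus(r-1)}\oplus\mathcal O_{\mathbb P^1}$, $H$ unsplit, $g$ transverse to $\mathcal F$; in particular the general leaf is strictly contained in $F'$, so $\dim F'>r$.

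In this transverse case $F'$ is smooth and invariant by $\mathcal F$, so $\mathcal F':=(\mathcal F|_{F'})^{\mathrm{sat}}$ is a del Pezzo foliation of rank $r$ on $F'$ whose general leaf is still $F$, strictly inside $F'$. Since minimal dominating families of $F'$ globalize over $Y_0$ to minimal dominating families of $X$, and $H$ restricts to one of $F'$, the foliation $\mathcal F'$ inherits the standing hypothesis on $F'$. By induction on dimension — using Theorem~\ref{thm1.0} and the present theorem, and, for the Picard-number-one subcase, Fujita's Theorem~\ref{delpezzo} together with Lemmas~\ref{completeintersection} and~\ref{grassmann} — one sees that $F'$ is $\mathbb P^N$, a smooth quadric, or a $\mathbb P^m$-bundle over $\mathbb P^k$. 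Each of these, however, carries a minimal dominating family whose rationally connected quotient cannot contain the $r$-dimensional leaves of $\mathcal F'$: lines on $\mathbb P^N$ or on the quadric give a trivial quotient, while fibre-lines on the bundle give the projection $F'\to\mathbb P^k$, and if $m\ge r$ one restricts $\mathcal F'$ further to a $\mathbb P^m$-fibre and is back to the projective space case. This contradicts the hypothesis on $F'$, and hence $\rho(X)=1$. The main difficulty is carrying out this last step rigorously — controlling the saturation defect defining $\mathcal F'$ and the correspondence of minimal dominating families between $X$ and $F'$ — and, in the "leaf $=$ fibre" step, the case analysis tracking which rulings of $F$ are tangent to $\mathcal F$ and where the common log canonical centre $T$ sits.
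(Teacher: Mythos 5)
Your proposal does not close, and the reason is that it never uses the one fact the paper's proof is built on: Proposition~\ref{logcenter}. By that proposition the closures of \emph{all} general leaves contain a fixed point of $X$ (the image $T$ of a common log canonical centre), while the hypothesis ${T_{\mathcal F}}_{|X_0}\subset T_{X_0/Y_0}$ together with the properness of the fibres of $\pi_0$ puts each such closure inside a fibre of $\pi_0$; since distinct general fibres are disjoint, $Y_0$ must be a point. That is essentially the whole argument: the paper only needs, in addition, to manufacture one \emph{unsplit} dominating family (the lines covering the leaves, which exist and have $L$-degree $1$ by Proposition~\ref{leaves}) so that $Y_0=\mathrm{pt}$ yields $\rho(X)=1$ via \cite[Proposition 2.3]{Araujo2008}. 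Your tangent/transverse dichotomy and the degree computations on the leaves are a detour: in the ``leaf $=$ fibre'' branch the common point of Proposition~\ref{logcenter} already contradicts the disjointness of the fibres, with no need to compute $-K_F\cdot\ell$ case by case (and your treatment there is shaky anyway, since the cones of types (3), (6), (7) are singular and cannot be smooth fibres, while for the ruling lines on a cone the general member does not avoid $\sing(F)$, so $\Delta\cdot\ell=0$ is not automatic).

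The transverse branch is where the gap is fatal. First, $(\mathcal F|_{F'})^{\mathrm{sat}}$ need not be a del Pezzo foliation: saturation adds an effective divisor to $-K_{\mathcal F}|_{F'}=(r-1)L|_{F'}$, so the index condition can fail. Second, the claim that minimal dominating families of $F'$ ``globalize'' to minimal dominating families of $X$, so that $\mathcal F'$ inherits the standing hypothesis, is unsubstantiated — a family of curves inside one fibre need not extend to a proper (or even minimal) dominating family on $X$, and the hypothesis on $X$ quantifies over families on $X$, not on $F'$. Third, and decisively, the terminal step of your induction produces no contradiction: if $F'\cong\mathbb P^N$ or a smooth quadric, the rationally connected quotient of the family of lines is \emph{trivial} ($Y_0'$ a point), so its unique fibre is all of $F'$ and trivially contains the $r$-dimensional leaves of $\mathcal F'$; the hypothesis is satisfied, not violated. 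So even granting the induction set-up, the argument does not terminate. The fix is to abandon the induction and argue as the paper does, with Proposition~\ref{logcenter} doing the work.
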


\begin{proof}
Let $n\colon F\rightarrow X$ be the normalization of the closure of a general leaf of $\mathcal{F}$; by assumption $\dim(F) \geq 3$. By proposition \ref{leaves}, we know all the possibilities for $F$; notice that cases (4), (5a) and (5b) do not occur. Since $\mathcal{O}_F(L_{|F}) = \mathcal{O}
_F(1)$ in all these cases, if $\ell$ is a line in $F$, then $n(\ell)$ is a line in $X$. Notice that in all cases, $F$ is covered by lines. Take the family $H$ on $X$ induced by lines on $F$, for general $F$. Then $L \cdot H = 1$, and in particular, $H$ is unsplit.

Since $\pi_0$ is proper, the closures of the general leaves of $\mathcal{F}$ are contained in the fibers of $\pi_0$. If $Y_0$ is not a point, then proposition \ref{logcenter} implies that the general fibers of $\pi_0$ have a common point, a contradiction. Therefore $Y_0$ is a point and, by \cite[Proposition 2.3]{Araujo2008}, $\rho(X) = 1$.
\end{proof}

Next we consider the case of manifolds with Picard number $1$ admitting del Pezzo foliations of rank $r \geq 3$. We can also restrict ourselves to Fano manifolds, by the following observation:

\begin{lem}
	Let $X$ be a projective manifold with $\rho(X) = 1$. If $X$ is uniruled, then $X$ is Fano.
\end{lem}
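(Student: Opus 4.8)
The plan is to use that on a variety with $\rho(X)=1$ ampleness is a numerical condition: the group $N^1(X)_{\mathbb Q}$ of $\mathbb Q$-divisors modulo numerical equivalence is one-dimensional, so fixing an ample generator $A$ one can write $-K_X\equiv\lambda A$ for a unique $\lambda\in\mathbb Q$, and by the Nakai--Moishezon--Kleiman criterion $-K_X$ is ample as soon as $\lambda>0$ (indeed $(-K_X)^{\dim V}\cdot V=\lambda^{\dim V}\,(A^{\dim V}\cdot V)>0$ for every subvariety $V$). So the whole point is to check that $\lambda$ is positive; and since $X$ is smooth, $-K_X$ is then an ample Cartier divisor, i.e.\ $X$ is Fano.

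To get $\lambda>0$ I would exhibit a rational curve on which $-K_X$ is positive. As $X$ is a uniruled projective manifold, it admits a minimal dominating family of rational curves $H$, and for a general member $[g]\in H$ the morphism $g\colon\mathbb P^1\to X$ is free, so by \cite[IV.2.9]{kollar1999rational} one has $g^*T_X\cong\mathcal O_{\mathbb P^1}(2)\oplus\mathcal O_{\mathbb P^1}(1)^{\oplus d}\oplus\mathcal O_{\mathbb P^1}^{\oplus(n-1-d)}$ for some $d\ge 0$. Setting $C:=g_*[\mathbb P^1]$, this gives $-K_X\cdot C=\deg g^*T_X=2+d>0$. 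On the other hand $-K_X\cdot C=\lambda\,(A\cdot C)$, and $A\cdot C>0$ because $A$ is ample and $C$ is a nonzero effective $1$-cycle; hence $\lambda>0$.

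I do not anticipate a real obstacle. The two ingredients are both standard: that ampleness can be detected on numerical equivalence classes (Kleiman's criterion), and that a general member of a dominating family of rational curves on a smooth variety in characteristic zero is free with the tangent-bundle splitting above — the latter is already quoted in the proof of Lemma \ref{decomposition}. The only point requiring a little care is to phrase everything in terms of numerical rather than linear equivalence, since a priori $\Pic^0(X)$ need not vanish.
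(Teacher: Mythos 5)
Your argument is correct: since $\rho(X)=1$, ampleness of $-K_X$ reduces to positivity of the coefficient $\lambda$ in $-K_X\equiv\lambda A$, and the freeness of a general member of a dominating family of rational curves gives $-K_X\cdot C\geq 2>0$, hence $\lambda>0$. The paper states this lemma without proof, and what you write is precisely the standard argument one is expected to supply; no gaps.
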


Consider then the index $\iota_X$ of $X$. Then, by \cite[Theorem 1.1]{kobayashi1973characterizations}, $\iota_X = \dim(X) + 1$ if and only if $X \cong \mathbb{P}^n$, and $\iota_X = \dim(X)$ if and only if $X \cong Q^n$, a quadric hypersurface. Thus in order to show that $X \cong \mathbb{P}^n$ or $X \cong Q^n$, we will show that $\iota_X \geq \dim(X)$. To do so, we will use lemma \ref{boundforindex}: we will find a family $V$ of lines on $X$, through a fixed point, of dimension $\dim(X) - 2$ or $\dim(X) - 1$. 

By theorem \ref{algebraicdelpezzo}, we may suppose that $\mathcal{F}$ is algebraically integrable. To find these families, we will consider the leaves of $\mathcal{F}$. By theorem \ref{algebraicdelpezzo}, the general leaves of $\mathcal{F}$ are rationally connected. By proposition \ref{logcenter}, the closures of these general leaves have a point in common. In particular, this shows that there exists a family of rational curves tangent to $\mathcal{F}$, passing through a fixed point. We do not know, a priori, the degree or the dimension of this family. However, the classification of leaves in Proposition \ref{leaves} will ensure the existence of families of rational curves of dimension $\dim(X) - 2$ or $\dim(X) - 1$. To estimate the dimension of such families, we will use the lemma below. 

\begin{lem}
Let $X$ be a projective manifold. Let $T \subsetneq X$ be an irreducible closed subset. Suppose that for every $x \in X$ general, there exists a closed subset $V(x) \subset T$ of fixed dimension $s$, such that for every general $y \in V(x)$, there exists a line in $X$ through $x$ and $y$. Then there exists a point $y \in T$ and a family $S$ of lines in $X$ passing through $y$, such that $\dim(S) \geq \dim(X) - \dim(T) + s - 1$, and such that the general member of $S$ is free. \label{dimfamilias}
\end{lem}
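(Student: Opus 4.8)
The plan is to reduce the statement to an incidence count on a universal family of lines. First I would set up the space $\mathcal{L}$ of lines on $X$ (an irreducible component of the Chow variety, or better the space of stable maps $\mathbb{P}^1 \to X$ of degree $1$ with respect to the fixed ample $L$), together with the universal line $\mathcal{U} \to \mathcal{L}$ and the evaluation map $e\colon \mathcal{U} \to X$. The hypothesis says: for $x$ in a dense open $U \subset X$, the set of lines through $x$ which meet $T$ in (the general point of) an $s$-dimensional subset $V(x)$ is nonempty; more precisely, lines through $x$ hitting $T$ sweep out a subvariety of $T$ of dimension exactly $s$. I would package this as an irreducible family $W \subset \mathcal{L}$ whose general member is a line meeting both $U$ and $T$, with the property that the evaluation at the ``$T$-end'' dominates a subset of $T$ and the evaluation at the ``$X$-end'' dominates $X$.

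Next I would perform the dimension count. Consider the incidence variety $I = \{([\ell],x,y) : x,y \in \ell,\ x\in X \text{ general},\ y \in T\}$ with its two projections to $X$ and to $T$. The fibre of $I \to X$ over a general $x$ contains, by hypothesis, a family of lines through $x$ whose union of ``$y$-points'' has dimension $s$ in $T$; since the lines through a point $x$ meeting a fixed general point $y$ form a family of dimension $\dim(\mathcal{L}_{x}) - (\text{codim of }\{y\}\text{-incidence})$, the fibre of $I \to X$ has dimension at least $s$ (a line through $x,y$ being determined up to the relevant moduli once $x$ is fixed and $y$ ranges over an $s$-dimensional set, plus possible extra moduli). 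Hence $\dim I \geq \dim X + s$. On the other hand $I$ maps to $T$; by pigeonhole there is a point $y \in T$ whose fibre $S := I_y$ (a family of lines through $y$) has dimension $\dim I - \dim T \geq \dim X - \dim T + s$. The ``$-1$'' in the statement comes from passing from the incidence variety (which records the extra point $x$ on each line) to the family of lines itself: forgetting $x$ drops the dimension by exactly $1$ since a general member of $S$ is an honest rational curve and $x$ moves in a $1$-dimensional set along it. This yields $\dim S \geq \dim X - \dim T + s - 1$.

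Finally I would check freeness of the general member of $S$. Because the lines in $S$ pass through the general point $x \in X$ (the evaluation map restricted to the universal curve over $S$ dominates $X$), the general member $[\ell] \in S$ meets the dense open locus over which the lines in our component are free; equivalently, a line through a general point of a uniruled variety which is a member of a covering family is free by \cite[Proposition II.3.7 and II.3.10]{kollar1999rational}. Concretely: the sublocus of $\mathcal{L}$ parametrizing non-free lines, and the locus where $e$ is not smooth, are proper closed subsets, and the general line through a general point of $X$ avoids the images of these, so it is free.

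The main obstacle I anticipate is the bookkeeping in the dimension count, in particular making rigorous the claim that the fibre of $I \to X$ has dimension at least $s$: one must be careful that the lines through $x$ hitting distinct general points of $V(x)$ are genuinely distinct (so that the ``$y$-coordinate'' contributes its full $s$ dimensions to $I$) rather than a single line sweeping out $V(x)$, and one must separately track whether there is positive-dimensional moduli of lines through two general points $x,y$. Organizing this via the two projections of the incidence variety $I$ and applying upper-semicontinuity of fibre dimension is the cleanest route, and the $-1$ bookkeeping (incidence variety versus family of lines) must be stated precisely to land exactly on the bound $\dim(X) - \dim(T) + s - 1$ claimed in the lemma.
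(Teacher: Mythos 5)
Your proposal is correct and follows essentially the same route as the paper: both run a dimension count on the universal family of lines (your double incidence variety $I$ is in effect the fibre product of the paper's universal family $\mathcal{U} \rightarrow H_T$ with the component of $\eta^{-1}(T)$ that is generically finite over $H_T$), obtaining $\dim(X)+s$ for the total space and then losing $\dim(T)$ by fibring over $T$ and $1$ by forgetting the moving point on each line. The one step to phrase carefully is freeness: a general member of $S$ is a general line through the \emph{fixed} point $y$, not a general line through a general point of $X$, so one should choose $y$ generically so that the fibres $S_y$ sweep out a dense subset of the irreducible dominating family and therefore cannot all lie in the proper closed non-free locus.
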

\begin{proof}
The assumptions of the lemma assure the existence of a dominating family $H_T$ of lines on $X$ such that for every $x \in X$ general, there exists a closed subset $V(x) \subset T$ of fixed dimension $s$, such that for every general $y \in V(x)$, there exists a line in $H_T$ through $x$ and $y$. Consider $\pi\colon \mathcal{U} \rightarrow H_T$ the universal family associated to $H_T$, with evaluation morphism $\eta \colon \mathcal{U} \rightarrow X$ (see \cite[Definition I.3.10]{kollar1999rational}). Then $\pi$ is a $\mathbb{P}^1$-bundle and in particular $\dim(H_T) = \dim(\mathcal{U}) - 1$. Now, for a general $x \in X$, $\pi(\eta^{-1}(x))$ consists of lines through $x$, and moreover by the description of $\eta$ (it does not contract fibers of $\pi$), $\dim(\eta^{-1}(x)) = \dim(\pi(\eta^{-1}(x)))$. Since for every general point in $V(x)$ there is a line through $x$ and $y$, we conclude that $\dim(\pi(\eta^{-1}(x))) \geq s$. Thus $\dim(\mathcal{U}) \geq \dim(X) + s$.

Now, by assumption, the preimage of $T$ in $\mathcal{U}$ dominates $H_T$, and since $H_T$ is dominating, there is a component of this preimage which is generically finite over $H_T$. Take then $W$ an irreducible component of $\eta^{-1}(T)$ such that $\pi_{|W}$ is generically finite and dominant. Let $T' = \overline{\eta(W)}$. Then $\dim(W) = \dim(H_T)$, and for $y \in T'$ general, $\dim(W) = \dim(T') + \dim(\eta^{-1}(y))$. Take $S = \pi(\eta^{-1}(y))$. Then $\dim(S) = \dim(W) - \dim(T') = \dim(H_T) - \dim(T') = \dim(\mathcal{U}) - 1 - \dim(T') \geq \dim(X) + s - 1 - \dim(T)$.
\end{proof}
	
We are now ready to prove our final result:

\begin{thm}
	Let $X$ be a projective manifold of dimension $n$ with $\rho(X) = 1$, admitting a del Pezzo foliation $\mathcal{F}$ of rank at least $3$ and log canonical singularities in the sense of McQuillan. Then either $X \cong \mathbb{P}^n$ or $X \cong Q^n$. \label{picard1}
\end{thm}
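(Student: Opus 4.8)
The strategy is to apply Lemma \ref{boundforindex} to produce a sufficiently large family of lines through a fixed point, forcing $\iota_X \geq n$ and hence $X \cong \mathbb{P}^n$ or $X \cong Q^n$ by Kobayashi--Ochiai. First I would invoke Theorem \ref{algebraicdelpezzo} to assume $\mathcal{F}$ algebraically integrable (the case $X \cong \mathbb{P}^n$ is already one of the conclusions), and let $n\colon F \to X$ be the normalization of the closure of a general leaf, with general log leaf $(F, \Delta, L_{|F})$. Since $r \geq 3$, Proposition \ref{leaves} restricts $(F,\Delta)$ to cases (1), (2), (3), (6) or (7); in all of these $\mathcal{O}_F(L_{|F}) = \mathcal{O}_F(1)$ under the relevant projective embedding, so lines in $F$ map to lines in $X$, and $F$ is swept out by lines. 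By Proposition \ref{logcenter}, applied to the $\mathbb{Q}$-factorial (indeed smooth) variety $X$, there is an irreducible closed $T \subsetneq X$ which is the image of a common log canonical center $S$ of $(F,\Delta)$; since the general leaf through a general point $x \in X$ contains $T$, I want to use the geometry of each case to build, through a general $x$, a family $V(x) \subset T$ of lines joining $x$ to general points of $V(x)$, of controlled dimension $s$, and then feed this into Lemma \ref{dimfamilias}.

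The core computation is case-by-case. In the cone cases (3) and (7), $F$ is a cone with vertex $V$ over $(Q^m, H)$ or over the pair (5a), and $S$ may be taken to be the vertex (or a small center containing it); lines on the cone through a general point $x$ all pass through $V$, so $T = n(V)$ is a point, $\dim T = 0$, and the family of lines on $F$ through a general $x$ has dimension equal to $\dim F - 1 = r-1$, giving through $n(V)$ a family $S$ of lines in $X$ of dimension $\geq n - 0 + (r-1) - 1 = n + r - 2 \geq n$ after applying Lemma \ref{dimfamilias} with $T$ a point and $s$ chosen appropriately — in fact here it is cleaner to directly note that all lines of $F$ through a general point pass through $n(V)$, so Lemma \ref{dimfamilias} with $\dim T = 0$, $s = 0$ gives a family of free lines through $n(V)$ of dimension $\geq n - 1$, whence $\iota_X \geq (n-1+2)/1 = n+1$, so $X \cong \mathbb{P}^n$. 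In the smooth cases (1) $F \cong \mathbb{P}^r$ and (2) $F \cong Q^r$: here $L_{|F} = \mathcal{O}_F(1)$ embeds $F$, and $T$ is the image of a log canonical center of $(\mathbb{P}^r, \mathcal{O}(2))$, respectively $(Q^r, \mathcal{O}(1))$. For $(\mathbb{P}^r, 2H_0)$ with $H_0$ a smooth quadric, the log canonical center is a point on $\Delta$; for $(Q^r, H_0)$ with $H_0$ a smooth hyperplane section, $\Delta$ itself is the only lc center. In both, through a general $x \in X$ and a general $y \in T$ (so $y$ on the general leaf through $x$, which is a $\mathbb{P}^r$ or $Q^r$ containing $x$), there is a line of $F$ through $x$ and $y$ since $\mathbb{P}^r$ and $Q^r$ are covered by lines joining general pairs of points. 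Applying Lemma \ref{dimfamilias} with $s = \dim V(x) = \dim T$ (take $V(x) = T$), one gets a family of free lines through a point $y \in T$ of dimension $\geq n - \dim T + \dim T - 1 = n - 1$, hence $\iota_X \geq n+1$ and $X \cong \mathbb{P}^n$ again — but case (2) should instead land on $X \cong Q^n$, so the dimension count must be sharpened or weakened appropriately; the right statement is that in case (2) the family has dimension $\geq n-2$ (not $n-1$), yielding $\iota_X \geq n$, allowing also $X \cong Q^n$.

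The main obstacle — and the part requiring genuine care — is exactly this bookkeeping of the dimension $s$ of $V(x)$ and of $\dim T$ in each case, together with verifying the free-curve hypothesis of Lemma \ref{boundforindex}. The subtlety is that $T$ is only guaranteed to contain \emph{some} lc center's image, and different cases of Proposition \ref{leaves} give lc centers of different dimensions (points for cones, a divisor $\Delta$ for $Q^r$, etc.), so the inequality $\dim S \geq n - \dim T + s - 1$ of Lemma \ref{dimfamilias} must be checked to always reach $n-2$ (for the quadric/cubic-type bound $\iota_X \geq n$) and one must then separately rule out, via Fujita's classification (Theorem \ref{delpezzo}) combined with Lemmas \ref{completeintersection} and \ref{grassmann}, the borderline possibility $\iota_X = n-1$: those are precisely the del Pezzo manifolds of Picard number $1$, and the two lemmas show none of them admits a del Pezzo foliation of rank $r \geq 3$. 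So the proof concludes: either the family-of-lines argument gives $\iota_X \geq n$, forcing $X \cong \mathbb{P}^n$ or $X \cong Q^n$; or $\iota_X = n-1$, which is excluded by Theorem \ref{delpezzo} together with Lemmas \ref{completeintersection} and \ref{grassmann}; or $\iota_X < n-1$, but then Theorem \ref{index} and the rank bound are violated since $\iota_{\mathcal{F}} = r - 1$ forces $\iota_X \geq r-1$ and the leaf classification pins down $\iota_X$ from below — so this case does not arise for $r \geq 3$. This completes the proof.
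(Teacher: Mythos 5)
Your overall strategy coincides with the paper's: use Proposition \ref{logcenter} to get a fixed set $T$, build families of lines via the leaf classification and Lemma \ref{dimfamilias}, apply Lemma \ref{boundforindex} and Kobayashi--Ochiai, and kill the residual case $\iota_X=n-1$ with Theorem \ref{delpezzo} plus Lemmas \ref{completeintersection} and \ref{grassmann}. However, the case-by-case dimension count --- which you yourself identify as ``the part requiring genuine care'' --- contains several genuine errors. First, your list of possible log leaves omits case (5c) of Proposition \ref{leaves}: the $\mathbb{P}^2$-bundle over $\mathbb{P}^1$ has dimension $3$ and therefore does occur for $r=3$; it must be treated (the paper handles it together with (2) and (3) by observing that the lines through a general point of $F$ sweep out a divisor). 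Second, your treatment of the cone case (3) is wrong: the cone over $(Q^m,H)$ is a singular quadric of rank $m+2$, and the lines through a general smooth point $p$ do \emph{not} all pass through the vertex --- they form the cone $T_pF\cap F$, a divisor in $F$. Moreover Proposition \ref{logcenter} does not let you choose the vertex as the common log canonical center; it only guarantees that \emph{some} center has fixed image, and in cases (6) and (7) that center can be a ruling or one of the divisorial components $\Delta_1,\Delta_2$ of $\Delta$. In case (7) this is not a removable nuisance: when the common center is $\Delta_1$ or $\Delta_2$ one only gets $\dim(T)-\dim(V(x))\le 2$, hence $\dim(S)\ge n-3$ and $\iota_X\ge n-1$; the possibility $\iota_X=n-1$ is a genuine output of the count, not merely a ``borderline possibility'' to be mentioned in passing, and this is exactly why the Fujita step is indispensable.

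Two further points. In case (2) your initial claim that a general $x$ and a general $y\in T$ are joined by a line of $F$ is false for $F\cong Q^r$: two general points of a quadric are not collinear on it; one must take $V(x)=\Delta\cap T_xQ^r$, a divisor in $T$, which is what ultimately yields $\dim(S)\ge n-2$ and $\iota_X\ge n$ (your self-correction lands on the right number but for the wrong stated reason). Finally, your closing argument that $\iota_X<n-1$ is excluded because ``$\iota_{\mathcal{F}}=r-1$ forces $\iota_X\ge r-1$'' is a non sequitur --- the index of a foliation does not bound the index of the ambient manifold from below. The correct reason this case does not arise is that, once each case of Proposition \ref{leaves} is handled properly, the dimension count always produces a family of free lines through a fixed point of dimension at least $n-3$, so Lemma \ref{boundforindex} gives $\iota_X\ge n-1$ unconditionally.
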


\begin{proof}
	The idea of the proof it to use the description of the leaves of $\mathcal{F}$ given in proposition \ref{leaves} (when $\mathcal{F}$ is algebraically integrable) to construct a family of lines satisfying the requirements of lemma \ref{dimfamilias}, in such a way that $\dim(T) - s \leq 2$. Then, by lemma \ref{boundforindex}, we will get $\iota_X \geq n-1$. If $\iota_X \geq n$, then $X \cong Q^n$ or $X \cong \mathbb{P}^n$ by \cite[Theorem 1.1]{kobayashi1973characterizations}. We will show that the case $\iota_X = n - 1$ is impossible.
	
	If $\mathcal{F}$ is not algebraically integrable, then $X \cong \mathbb{P}^n$ by theorem \ref{algebraicdelpezzo}. Suppose that $\mathcal{F}$ is algebraically integrable. Denote by $n\colon F \rightarrow X$ the normalization of a general leaf of $\mathcal{F}$. By proposition \ref{logcenter}, there is a log canonical center of $(F,\Delta)$ whose image $T$ in $X$ is fixed. Since the rank of the foliation is at least $3$, proposition \ref{leaves} implies that $(F, \Delta)$ is one of the cases (1), (2), (3), (5c), (6) or (7) given by it. 
	
	We now apply lemma \ref{dimfamilias} in each of these cases. Notice that in all the cases above, if $\ell \subset F$ is a line with respect to $n^*(L)$, then $n(\ell) \subset X$ is also a line with respect to $L$, where $L$ is an ample generator of $\Pic(X)$.
	
	In case (1), if $x \in X$ and $y \in T$ are general, then there is a line through $x$ and $y$. Indeed, it suffices to take $n(\ell)$, where $\ell$ is the line in $F$ through a point of $n^{-1}(x)$ and a point of $n^{-1}(y)$. Thus, by lemma \ref{dimfamilias}, for $y \in T$ general, there is a family $S$ of lines in $X$ through $y$ satisfying $\dim(S) \geq n - 1$. We conclude, by lemma \ref{boundforindex}, that $\iota_X = n+1$ and therefore $X \cong \mathbb{P}^n$. 

	Suppose now we are in cases (2), (3) or (5c). Then $F$ is either $Q^r$, a possibly singular quadric, or a $\mathbb{P}^2$-bundle over $\mathbb{P}^1$. If $p \in F$ is general, then the lines in $F$ through $p$ form a divisor in $F$. In particular, intersecting this divisor with $\Delta$, we see that there is a divisor in $\Delta$ whose points are connected to $p$ by a line. This means that for general $x \in X$, $\dim(T) - \dim(V(x)) \leq 1$, and thus by lemma \ref{dimfamilias}, there is a family $S$ of lines in $X$ through a $y \in T$ fixed, such that $\dim(S) \geq n$. We conclude, by lemma \ref{boundforindex}, that $\iota_X \geq n$, which implies that $X \cong \mathbb{P}^n$ or $X \cong Q^n$.
	
	In cases (6) and (7) the log leaf $(F, \Delta)$ is a cone over a smooth variety.
	
	In case (6), the common log canonical center may be the vertex $V$, or one of the rulings $\Delta_1$ or $\Delta_2$ giving $\Delta$, where by a ruling we mean the cone over a point in $C_d$. In the first case, for every $x \in X$ general, $V(x) = T$. In the second case, for general $x \in X$, $V(x)$ has codimension at most $1$ in $T$. Thus, by lemma \ref{dimfamilias}, there exists a family of lines in $X$ passing through a fixed $y \in T$, with $\dim(S) \geq n$. We conclude that $\iota_X \geq n$, which implies that $X \cong \mathbb{P}^n$ or $X \cong Q^n$.
	
	In case (7), the common log canonical center may be $\Delta_1$ or $\Delta_2$ (the cones over $\sigma$ and $f$), or the intersection of $\Delta_1$ and $\Delta_2$, which is a ruling of the leaf. In the first case, $\dim(T) - \dim(V(x)) \leq 2$ for general $x \in X$. In the second case, $\dim(T) - \dim(V(x)) \leq 1$ for general $x \in X$. Thus, by lemma \ref{dimfamilias}, there is a family $S$ of lines in $X$, such that $\dim(S) \geq n - 3$. We conclude that $\iota_X \geq n -1$, which implies that $X \cong \mathbb{P}^n$ or $X \cong Q^n$ or $\iota_X = n- 1$.
	
	Therefore, to finish the proof, we just need to show that $\iota_X = n-1$ is impossible. Theorem \ref{delpezzo} gives all the possibilities for $X$ in case $\iota_X = n - 1$. It is then enough to show that the manifolds listed in theorem \ref{delpezzo} cannot have a del Pezzo foliation. For cases (1), (2), (4) and (5) of that theorem, $X$ is a complete intersection in a weighted projective space $\mathbb{P}(a_0,\dots,a_N)$, and the result follows from remark \ref{normal} and from lemma \ref{completeintersection} (observe that in our case, if $A$ is an ample generator of $\Pic(X)$, then $-K_X \sim (n-1)A$ and $-K_{\mathcal{F}} \sim (r-1)A$, and therefore $\det(\mathcal{N}_{\mathcal{F}}) \sim (n-r)A$). In case number (3) of theorem \ref{delpezzo}, the result follows from lemma \ref{grassmann}. This concludes the proof.
\end{proof}

\bibliographystyle{alpha}
\bibliography{2nddraft}

\end{document}